\newcommand{\expn}{\operatorname{e}}
\newcommand{\diag}{\operatorname{diag}}
\newcommand{\beq}{\begin{equation}}
\newcommand{\eeq}{\end{equation}}
\newcommand {\mat}      [1] {\left[\begin{array}{#1}}
\newcommand {\rix}          {\end{array}\right]}
\newcommand {\smat}      [1] {\left[\begin{smallmatrix}{#1}}
\newcommand {\srix}          {\end{smallmatrix}\right]}
\newcommand {\s}      [1] {\begin{smallmatrix}{#1}}
\newcommand {\se}          {\end{smallmatrix}}
\newcommand{\trace}{\operatorname{tr}}
\newtheorem{defn}{Definition}[section]
\newtheorem{remark}[defn]{Remark}
\newtheorem{example}[defn]{Example}
\newtheorem{lem}[defn]{Lemma}
\newtheorem{prop}[defn]{Proposition} 
\newtheorem{thm}[defn]{Theorem}
\title{Model reduction for fully nonlinear stochastic systems}
\author{Martin Redmann\thanks{University of Rostock, Institute of Mathematics, Ulmenstraße 69, 18057 Rostock, Germany, Email: {\tt
martin.redmann@uni-rostock.de}.}
}
\begin{document}

\maketitle

\begin{abstract}
This paper presents a novel model order reduction framework tailored for fully nonlinear stochastic dynamics without  lifting them to quadratic systems and without using linearization techniques. By directly leveraging structural properties of the nonlinearities -- such as local and one-sided Lipschitz continuity or one-sided linear growth conditions -- the approach defines generalized reachability and observability Gramians through Lyapunov-type differential operators. These Gramians enable projection-based reduction while preserving essential dynamics and stochastic characteristics. The paper provides sufficient conditions for the existence of these Gramians, including a Lyapunov-based mean square stability criterion, and derives explicit output error bounds for the reduced order models. Furthermore, the work introduces a balancing and truncation procedure for obtaining reduced systems and demonstrates how dominant subspaces can be identified from the spectrum of the Gramians. The theoretical findings are grounded in rigorous stochastic analysis, extending balanced truncation techniques to a broad class of nonlinear systems under stochastic excitation.
\end{abstract}

\textbf{Keywords:} model order reduction$\cdot$ nonlinear stochastic systems $\cdot$ controlled drift and diffusion  $\cdot$ quadratic Lyapunov functions $\cdot$ Gramians $\cdot$ error bounds

\noindent\textbf{MSC classification:} 60H10 $\cdot$ 60H35 $\cdot$  60J65 $\cdot$  65C30   $\cdot$ 68Q25 $\cdot$  	93D05 $\cdot$ 93E03

\pagestyle{myheadings}
\thispagestyle{plain}
\markboth{M. Redmann}{Model reduction for fully nonlinear stochastic systems}


\section{Introduction}

Model order reduction (MOR) aims to derive low-dimensional approximations of high- or infinite-dimensional dynamical systems, significantly reducing computational complexity while preserving essential features of the original system. A widely used class of MOR techniques relies on projection-based strategies, such as Galerkin or Petrov-Galerkin schemes. In these methods, the key idea is to approximate the solution manifold by a low-dimensional linear subspace and to project the system dynamics onto this subspace to obtain a reduced order model.

There is a rich spectrum of MOR approaches. Proper orthogonal decomposition (POD) \cite{pod} constructs reduced subspaces from data (snapshots) of full-order simulations. Other methods such as the iterative rational Krylov algorithm (IRKA) \cite{irka} rely on interpolation or on optimizing system-theoretic error measures. Balanced truncation (BT) \cite{moo1981}, which is based on reachability and observability Gramians, offers a robust framework with guaranteed error bounds for linear deterministic systems \cite{BT_bound_enns, BT_bound_glover}.

In recent years, substantial progress has been made in adapting MOR strategies to large-scale nonlinear systems. This includes both data-driven approaches \cite{gosea_antoulas, pod_kramer, qian_data} and interpolation- or optimization-based techniques \cite{breiten_benner2, nonlinear_irka}, typically developed for deterministic systems. BT-type extensions to nonlinear systems are, for example, introduced in \cite{nonlinear_bt, Scherpen, verriest_nonlinear_bt}, but none of these approaches came with error bounds. So-called (generalized) incremental BT \cite{incr_bt, gen_incr_bt}, however, uses a new notion of energy functionals leading to a balancing scheme that has nice theoretical properties like stability preservation and an a-priori error bound. In particular, generalized incremental BT \cite{gen_incr_bt} can be linked to this work when $G\equiv 0$, $\Gamma\equiv 0$, $M\equiv 0$ and $h$ is linear in \eqref{original_system}. That ansatz uses quadratic functions to bound the new energy functionals and provides an error bound under symmetry assumptions on the nonlinear term. In this paper, we study balancing in a much more general framework in comparison to \cite{gen_incr_bt}, but we also use quadratic Lyapunov-type functions as a basis and come to similar theoretical findings. Recently, balancing  has been further studied in works such as \cite{morBenG24, Kramer2022}, often by transforming nonlinear dynamics into quadratic-bilinear form via lifting. While lifting expands the applicability of linear MOR tools, it may obscure essential nonlinear properties and complicate stability analysis -- especially in stochastic settings. In fact, all BT techniques for quadratic systems fail to show stability preservation and have no error bounds.

In the stochastic context, MOR becomes even more important. The computational burden increases substantially due to the need for repeated system evaluations, such as in Monte Carlo simulations or uncertainty quantification. Additionally, the complexity of stochastic differential equations (SDEs) is amplified by their connection to high-dimensional partial differential equations via the Feynman-Kac formula. Hence, MOR for SDEs is vital for practical computations. POD-based reduction for SDEs has been studied in \cite{pod_sde}, and several balancing-related or optimization-based MOR methods for linear stochastic systems have been proposed \cite{beckerhartmann, bennerdammcruz, redmannbenner, mliopt}, offering detailed error and stability analyses. However, generalizing these techniques to nonlinear stochastic systems remains challenging. Initial steps toward stochastic bilinear equations were made in \cite{redstochbil} and MOR for SDEs with nonlinear drift \cite{nonlinear_drift} was recently studied. However, these schemes do not extend easily to more general nonlinearities.

The goal of this paper is to advance BT techniques for fully nonlinear stochastic systems with controlled diffusion. Therefore, it can be seen as a natural extension of \cite{redstochbil, nonlinear_drift} as well as of \cite{stoch_nonzero_initial}, where BT was first analyzed for stochastic systems with controlled diffusion. Moreover, our work does not attempt to linearize or rewrite nonlinear dynamics as quadratic systems. The goal is to directly exploit properties like one-sided Lipschitz or linear growth conditions of the nonlinearities. We tailor a balancing scheme including these properties of the system coefficients. Further, the foundation of the proposed MOR technique will be quadratic Lyapunov functions which naturally lead to a notion of algebraic Gramians. Additionally, we clarify under which conditions these algebraic objects exit. Here, we create links to Lyapunov stability for SDEs, see \cite{staboriginal, mao}. On the other hand, we do not need any kind of energy interpretation to justify the usage of these Gramians. In particular, we prove that dominant subspaces of the original nonlinear stochastic dynamics are linked to certain eigenspaces of the underlying Gramians. This leads to a generalized BT method for nonlinear stochastic systems. We explain this procedure that is based on simultaneously diagonalizing both Gramians and subsequently eliminates non-essential components. The main result is an a-priori error bound under a symmetry assumption on the nonlinear vector fields, a requirement that has been pointed out in \cite{gen_incr_bt} in a deterministic framework as well.
\smallskip

The paper is structured as follows. In Section \ref{sec2}, the mathematical framework is introduced. We define a class of controlled stochastic differential equations  incorporating nonlinear drift and diffusion terms. This section further introduces Lyapunov-type quadratic forms and defines a differential operator crucial for analyzing system stability and deriving Gramians.
Section \ref{sec3} develops the theoretical foundations of the proposed MOR method. Generalized reachability and observability Gramians for nonlinear stochastic systems are introduced via Lyapunov-type inequalities. We provide sufficient conditions for the existence of these Gramians, based on one-sided Lipschitz properties and mean square exponential stability.
Finally, links between eigenspaces of the proposed Gramians and state directions that can be truncated with minimal impact on dynamics or output are proved.
Section \ref{sec_BT} describes how to construct a reduced model from the full system by simultaneously balancing the Gramians. A coordinate transformation is used to diagonalize the Gramians, aligning dominant directions. The system is then truncated by retaining only the significant modes. Section \ref{sec_propo_red_sys} contains the most important theoretical finding. We identify  the Gramians of the reduced system and show an a-priori error bound that provides  a criterion for the truncation of the full model.

\section{Setting}\label{sec2}

Let $\left(\Omega, \mathcal F, (\mathcal F_t)_{t\geq 0}, \mathbb P\right)$\footnote{$(\mathcal F_t)_{t\geq 0}$ is right continuous and complete.} be a filtered probability space on which every stochastic process appearing in this paper is defined. Given an $\mathbb R^q$-valued Wiener process $w=\begin{bmatrix}w_1 & \ldots & w_q\end{bmatrix}^\top$, we assume that it is $(\mathcal F_t)_{t\geq 0}$-adapted and its
increments $w(t+h)-w(t)$ are independent of $\mathcal F_t$ for $t, h\geq 0$. Its covariance function is characterized by the matrix $K=\left(k_{ij}\right)_{i, j=1, \ldots, d}$, i.e., $\mathbb E[w(t)w(t)^\top]=K t$ holds.
We consider a high-dimensional nonlinear stochastic system of the following form:
 \begin{subequations}\label{original_system}
\begin{align}\label{stochstatenew}
             dx(t)&=[f(x(t))+Bu(t)+ G(x(t))u(t)]dt+[\Gamma(x(t))+M(u(t))]dw(t),\\ \label{output_eq}
            y(t) &= h(x(t))+E u(t),\quad t\geq 0,
\end{align}
\end{subequations}
with  $G(x):=\begin{bmatrix}g_1(x) & \ldots & g_m(x)\end{bmatrix}$ and $\Gamma(x):=\begin{bmatrix}\gamma_1(x) & \ldots & \gamma_q(x)\end{bmatrix}$, where $f, g_i, \gamma_j: \mathbb R^n\to\mathbb R^{n}$ and $h:\mathbb R^n\to \mathbb R^p$ are (nonlinear) functions that are sufficiently nice in order to guarantee a unique global solution (e.g. local Lipschitz continuity combined with a growth condition). We specify the particular requirements later. Moreover, $M(u):=\begin{bmatrix}M_1 u & \ldots & M_q u\end{bmatrix}$ is a linear map on $\mathbb R^m$ defined by matrices $M_j\in \mathbb R^{n\times m}$. The additional control parts are determined by $B\in \mathbb R^{n\times m}$ and $E\in \mathbb R^{p\times m}$. We assume without loss of generality that $f(0)=g_i(0)= \gamma_j(0)=0$ and $h(0)=0$ for all $i\in\{1, \dots, m\}$ and $j\in\{1, \dots, q\}$. If this is originally not true, then the initial values of these nonlinearities can be made part of the control terms $Bu$, $M_ju$ and $Eu$. In addition, $g_i$ shall either be identically zero or non-constant in order to distinguish $G(\cdot) u$ from $Bu$. The stochastic control $u\in L^2_T$ takes values in $\mathbb R^m$, is $(\mathcal F_t)_{t\geq 0}$-adapted  and satisfies $\|u\|_{L^2_T}^2:=\mathbb E \int_0^T \|u(t)\|^2 dt<\infty$ for any $T>0$, where $\|\cdot\|$ denotes the Euclidean norm and $\langle \cdot, \cdot \rangle$ the corresponding inner product. If $u$ is additionally square integrable on the entire time line $[0, \infty)$, we write $u\in L^2$.

We introduce Lyapunov-type quadratic forms $V_X(x):= x^\top X x$, where the dimension of $x$ is generic and depends on the number of rows/columns of the positive (semi)definite matrix $X$. Let us further denote the gradient by $D$ and Hessian by $D^2$. We introduce a differential operator $L$ that is essential for the proposed MOR procedure. It will be used to define the general concept of Gramians and is given by
\begin{equation}\label{layp_op}
\begin{aligned}
LV(x, y; \delta)&=\langle D V(x-y), f(x)-f(y)\rangle \\  &\quad+ \frac{\delta^2}{2}  \trace\left((G(x)-G(y)) (G(x)-G(y))^\top D^2 V(x-y)\right)\\  &\quad+ \frac{1}{2}  \trace\left((\Gamma(x)-\Gamma(y)) K (\Gamma(x)-\Gamma(y))^\top D^2 V(x-y)\right)
\end{aligned}
\end{equation}
for $x, y\in\mathbb R^n$, a twice differentiable function $V:\mathbb R^n\to \mathbb R$ and $\delta >0$. If we replace $f$, $G$ and $\Gamma$ in \eqref{layp_op} by functions defined on $\mathbb R^r$, $r\neq n$, we assume $x, y\in\mathbb R^r$ and define $V$ as a function on $\mathbb R^r$.
\begin{prop}\label{prop_lyap}
Setting $V(x)=V_X(x)= x^\top X x$ in \eqref{layp_op} for a positive semidefinite matrix $X\in\mathbb R^{n\times n}$, we obtain
 \begin{align*}
LV_X(x, y; \delta)
&=2\langle X(x-y), f(x)-f(y)\rangle + \delta^2\trace\left((G(x)-G(y)) (G(x)-G(y))^\top X\right)\\ &\quad+ \trace\left((\Gamma(x)-\Gamma(y)) K (\Gamma(x)-\Gamma(y))^\top X\right)\\
&=2\langle x-y, X(f(x)-f(y))\rangle + \delta^2\|X^{\frac{1}{2}}(G(x)-G(y))\|_F^2+ \|X^{\frac{1}{2}}(\Gamma(x)-\Gamma(y)) K^{\frac{1}{2}}\|_F^2.
\end{align*}
\end{prop}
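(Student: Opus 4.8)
The plan is to compute $LV_X(x,y;\delta)$ directly from the definition \eqref{layp_op} by substituting $V = V_X$ and evaluating the gradient and Hessian. First I would record that for the quadratic form $V_X(z) = z^\top X z$ with $X$ symmetric positive semidefinite, the gradient is $D V_X(z) = 2Xz$ and the Hessian is the constant matrix $D^2 V_X(z) = 2X$. Substituting $z = x-y$ into the first term of \eqref{layp_op} gives $\langle 2X(x-y), f(x)-f(y)\rangle = 2\langle X(x-y), f(x)-f(y)\rangle$, and by symmetry of $X$ this equals $2\langle x-y, X(f(x)-f(y))\rangle$.

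Next I would handle the two trace terms. Since $D^2 V_X(x-y) = 2X$ is constant, the factor $\tfrac{\delta^2}{2}$ combines with the $2$ to yield $\delta^2 \trace\big((G(x)-G(y))(G(x)-G(y))^\top X\big)$, and similarly the $\tfrac12$ in the diffusion term cancels the $2$ to give $\trace\big((\Gamma(x)-\Gamma(y)) K (\Gamma(x)-\Gamma(y))^\top X\big)$. This establishes the first displayed equality. For the second equality I would rewrite each trace term as a Frobenius norm: using cyclicity of the trace and $X = X^{1/2} X^{1/2}$ (valid since $X \succeq 0$ admits a symmetric square root), one has $\trace\big(A A^\top X\big) = \trace\big(X^{1/2} A A^\top X^{1/2}\big) = \|X^{1/2} A\|_F^2$ with $A = G(x)-G(y)$, and likewise $\trace\big(B K B^\top X\big) = \trace\big(X^{1/2} B K^{1/2} K^{1/2} B^\top X^{1/2}\big) = \|X^{1/2} B K^{1/2}\|_F^2$ with $B = \Gamma(x)-\Gamma(y)$, where $K^{1/2}$ is the symmetric square root of the covariance matrix $K \succeq 0$.

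This is an entirely routine computation with no real obstacle; the only points requiring a word of care are that $X$ and $K$ are positive semidefinite so that their symmetric square roots exist, and that the constancy of the Hessian of a quadratic form is what makes the trace terms collapse cleanly. I would therefore present the argument as a short direct verification rather than as a staged proof.
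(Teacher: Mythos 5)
Your argument is correct and follows exactly the paper's proof: insert $DV_X(z)=2Xz$ and $D^2V_X(z)=2X$ into \eqref{layp_op}, then rewrite the trace terms via the square roots $X^{1/2}$, $K^{1/2}$ and the definition of the Frobenius norm. You simply spell out the same routine verification in more detail.
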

\begin{proof}
 We insert $D V_{X}(x)=2 X x$ and $D^2 V_{X}(x)=2 X$ into \eqref{layp_op} and obtain the second identity using the existence of the square root $X^{\frac{1}{2}}$, the properties of the trace and the definition of the Frobenius norm $\|\cdot\|_F$.
\end{proof}

\section{Gramians -- definition, existence criteria, and dominant subspace characterization}\label{sec3}

The section deals with algebraic Gramians,  the central objects required for the MOR procedure proposed in this paper. We begin  by introducing Gramians based on the Lyapunov operator $L$ defined in \eqref{layp_op}. Secondly, we discuss a Lyapunov criterion for exponential mean square asymptotic stability as well as a more general notion of one-sided Lipschitz continuity with negative constants. These conditions yield the existence of  the Gramians of interest. Finally, we analyze the relation between dominant subspaces of \eqref{original_system} and the eigenspaces of the proposed Gramians.

\subsection{Definition of Gramians}

We introduce and analyze two objects $U_X$ and $S_X$ for a positive (semi)definite matrix $X$. We need those for defining a reachability Gramian.
\begin{prop}\label{prop_U_S}
Let $z\in\mathbb R^m$ and $X$ be a positive (semi)definite matrix. Defining   \begin{align}\label{def_U}
 U_{X} = \mathcal U - \sum_{i, j=1}^q M_i^\top X M_j k_{ij}
      \end{align}
with the positive definite matrix $\mathcal U$ being a free parameter, we have that \begin{align}\label{traceU_rel}
 z^\top U_{X} z =  z^\top \mathcal U z -   \trace\left(M(z) K M(z)^\top X\right).                                                                                \end{align}
Moreover, introducing \begin{align}\label{def_S}
    S_{X}(x,y)&=B^\top X(x-y)+\sum_{i, j=1}^q M_i^\top X (\gamma_j(x)-\gamma_j(y))k_{ij}
\end{align}
for $x, y \in\mathbb R^n$, we obtain
\begin{align} \label{traceS_rel}
 z^\top  S_{X}(x, y) = z^\top B^\top X (x-y) + \trace\left((\Gamma(x) - \Gamma(y)) K M(z)^\top X\right)
\end{align}
\end{prop}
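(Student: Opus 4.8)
The plan is to establish the two identities \eqref{traceU_rel} and \eqref{traceS_rel} by direct computation: expand the matrix products $M(z)KM(z)^\top$ and $(\Gamma(x)-\Gamma(y))KM(z)^\top$ columnwise, push everything through the trace using its cyclicity, and finally invoke the symmetry $K=K^\top$ (valid since $K$ is a covariance matrix) to reconcile the index pattern with the defining sums \eqref{def_U} and \eqref{def_S}. No intermediate objects beyond these expansions are needed.

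For \eqref{traceU_rel}, I would start from $M(z)=\begin{bmatrix}M_1 z & \ldots & M_q z\end{bmatrix}$, so that $M(z)KM(z)^\top=\sum_{i,j=1}^q k_{ij}\,(M_i z)(M_j z)^\top$. Then $\trace\left((M_i z)(M_j z)^\top X\right)=(M_j z)^\top X(M_i z)=z^\top M_j^\top X M_i z$, hence $\trace\left(M(z)KM(z)^\top X\right)=\sum_{i,j=1}^q k_{ij}\, z^\top M_j^\top X M_i z$. Swapping the summation indices $i\leftrightarrow j$ and using $k_{ji}=k_{ij}$ turns this into $z^\top\big(\sum_{i,j=1}^q M_i^\top X M_j k_{ij}\big) z$, and subtracting from $z^\top\mathcal U z$ yields $z^\top U_X z$ in view of \eqref{def_U}.

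For \eqref{traceS_rel} I would argue in the same way, now using the columns of $\Gamma(x)-\Gamma(y)=\begin{bmatrix}\gamma_1(x)-\gamma_1(y) & \ldots & \gamma_q(x)-\gamma_q(y)\end{bmatrix}$ to write $(\Gamma(x)-\Gamma(y))KM(z)^\top=\sum_{i,j=1}^q k_{ij}\,(\gamma_i(x)-\gamma_i(y))(M_j z)^\top$, so that $\trace\left((\Gamma(x)-\Gamma(y))KM(z)^\top X\right)=\sum_{i,j=1}^q k_{ij}\, z^\top M_j^\top X(\gamma_i(x)-\gamma_i(y))$; reindexing and symmetry of $K$ rewrite this as $\sum_{i,j=1}^q k_{ij}\, z^\top M_i^\top X(\gamma_j(x)-\gamma_j(y))$, which together with the term $z^\top B^\top X(x-y)$ matches $z^\top S_X(x,y)$ from \eqref{def_S}. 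I do not anticipate any real obstacle here — the argument is routine bookkeeping — and the only point worth flagging explicitly is the reindexing step, since in \eqref{def_U}--\eqref{def_S} the matrix factor $M_i$ is paired with the index $i$ while the trace naturally produces a pairing with $j$; the two agree precisely because $K$ is symmetric.
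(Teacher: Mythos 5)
Your proposal is correct and matches the paper's own argument in essence: both are direct columnwise expansions of $M(z)KM(z)^\top$ and $(\Gamma(x)-\Gamma(y))KM(z)^\top$, using cyclicity of the trace and the symmetry $k_{ij}=k_{ji}$ to align the index pairing with \eqref{def_U} and \eqref{def_S} (the paper merely organizes the trace as $\sum_i e_i^\top(\cdot)e_i$ rather than via rank-one outer products). Your explicit flagging of the reindexing step is exactly the point where the paper invokes $k_{ij}=k_{ji}$, so nothing is missing.
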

\begin{proof}
  Let $(e_i)$ be the canonical basis of $\mathbb R^q$. Hence, we have
 \begin{align*}
  \trace\left((\Gamma(x)- \Gamma(y))K M(z)^\top X\right)&=\sum_{i=1}^q e_i^\top M(z)^\top X(\Gamma(x)- \Gamma(y)) K e_i \\
  &= z^\top \sum_{i, j=1}^q M_i^\top X (\gamma_j(x)- \gamma_j(y)) k_{ij}\\
  &= z^\top  S_{X}(x, y) - z^\top B^\top X(x-y)
 \end{align*}
 exploiting the definition of $S_{X}$ and the fact that $k_{ij}=k_{ji}$. Further, by the definition of $U_X$, we obtain  \begin{align*}
     \trace\left(M(z) K M(z)^\top X\right)&=
     \sum_{i=1}^q e_i^\top M(z)^\top X M(z) K e_i = z^\top \sum_{i, j=1}^q M_i^\top X M_j k_{ij} z \\
     &= - z^\top U_{X} z+ z^\top \mathcal U z.
                            \end{align*}
\end{proof}
Using the objects introduced in Proposition \ref{prop_U_S}, we define reachability Gramians in the following.
\begin{defn}\label{def_P}
 A positive definite matrix $P$ is called (full) reachability Gramian
  if the matrix  $U_{P^{-1}}$ defined in  \eqref{def_U} is positive definite
 and if \begin{align}\label{inequalityP}
 LV_{P^{-1}}(x, y; \delta)\leq -S_{P^{-1}}(x,y)^\top U_{P^{-1}}^{-1} S_{P^{-1}}(x, y)
                      \end{align}
holds for all $x, y\in\mathbb R^n$ and some $\delta>0$.
The right-hand side in \eqref{inequalityP} is defined by \eqref{def_U}  and \eqref{def_S}. The  left-hand side is given by Proposition \ref{prop_lyap}. If we replace the general $y\in \mathbb R^n$ in \eqref{inequalityP} by $y=0$, we call $P$ simplified reachability Gramian.
\end{defn}
The role of a simplified reachability Gramian becomes clear when characterizing dominant subspaces of \eqref{original_system} in Section \ref{sec_dom_sub} while a full Gramian $P$  reveals its relevance in the later error analysis, see Section \ref{sec_propo_red_sys}.
\begin{defn}\label{def_Q}
 A positive (semi)definite matrix $Q$ is called (full) observability Gramian if  \begin{align}\label{inequalityQ}
 LV_{Q}(x, y; \delta)\leq -(h(x)- h(y))^\top (h(x)- h(y))
                      \end{align}
holds for all $x, y\in\mathbb R^n$ and some $\delta>0$, where the left-hand side in \eqref{inequalityQ} is given by Proposition \ref{prop_lyap}. If we replace the general $y\in \mathbb R^n$ in \eqref{inequalityQ} by $y=0$, we call $Q$ simplified observability Gramian.
\end{defn}
Both a full and a simplified observability Gramian can be motivated in the context of dominant subspace detection in Section \ref{sec_dom_sub}. However, a full Gramian $Q$ is essential when proving the error bound in Section \ref{sec_propo_red_sys}. It is important to mention that inequalities like \eqref{inequalityP} and \eqref{inequalityQ} can be found in \cite{gen_incr_bt}, where generalized incremental BT was studied in a framework, where $h$ is linear and $G\equiv 0$, $\Gamma\equiv0$ as well as $M\equiv 0$. In \cite{gen_incr_bt}, it was further pointed out that point symmetry of the nonlinearities is crucial in the context of the reachability Gramian. This symmetry will later play a role when we prove an error bound for the proposed MOR scheme.
\begin{remark}\label{remark_lin_case}
Let us focus on the special case of having a linear system, i.e., $f(x)= Ax$, $G\equiv 0$, $\gamma_j(x)=N_jx$ and $h(x)=Cx$ for matrices $A, N_j\in\mathbb R^{n\times n}$ and $C\in\mathbb R^{p\times n}$. Inserting this into the identity given in Proposition \ref{prop_lyap}, we find that
\begin{align*}
LV_X(x, y; \delta)
&=2\langle X(x-y), A(x-y)\rangle + \trace\left(\Gamma(x-y) K \Gamma(x-y)^\top X\right)    \\
&= (x-y)^\top (A^\top X+ X A)(x-y)+\trace\left(\sum_{j, i=1}^q N_j(x-y) k_{ji} (x-y)^\top N_i^\top X\right)\\
&= (x-y)^\top (A^\top X+ X A+\sum_{i, j=1}^q  N_i^\top X N_j k_{ij})(x-y)\end{align*}
exploiting that $k_{ij}=k_{ji}$. Inserting the linear structure of $h$ into \eqref{inequalityQ}, we obtain the matrix inequality \begin{align}\label{Q_mat_ineq}
 A^\top Q+ Q A+\sum_{i, j=1}^q  N_i^\top Q N_j k_{ij}\leq -C^\top C.                                                                                                                                                                                                                                                              \end{align}
This means that a solution of \eqref{Q_mat_ineq} is an observability Gramian. This observability Gramian is frequently used for BT-related MOR applied to stochastic linear systems \cite{bennerdammcruz, redmannbenner}.  Moreover, observing that   $S_{P^{-1}}(x,y)=(B^\top P^{-1}+\sum_{i, j=1}^q M_i^\top P^{-1} N_j k_{ij})(x-y)$ in this special case, we obtain that a solution of \begin{align*}
 & A^\top P^{-1} + P^{-1}  A+\sum_{i, j=1}^q  N_i^\top P^{-1}  N_j k_{ij}\\
 &\leq -\Big(P^{-1} B+\sum_{i, j=1}^q N_i^\top P^{-1} M_j k_{ij}\Big) U_{P^{-1}}^{-1}\Big(B^\top P^{-1}+\sum_{i, j=1}^q M_i^\top P^{-1} N_j k_{ij}\Big)                                                                                                                                                                                                                                                                                                                                                                                                                                                                                        \end{align*}
satisfies \eqref{inequalityP}, where $U_{P^{-1}}= \mathcal U - \sum_{i, j=1}^q M_i^\top P^{-1} M_j k_{ij}$. This is a Gramian used in \cite{stoch_nonzero_initial} or in \cite{bennerdammcruz}, where the  diffusion is additionally uncontrolled. Setting $N_i=0$, $M_i=0$ and $\mathcal U=I$ in this matrix inequality, we obtain  $A P + P  A^\top\leq -B B^\top$ yielding the typical Gramian  for deterministic linear systems. Let us further notice that the concept of a simplified Gramian is redundant in the linear case as it also satisfies the criterion for the full Gramian.
\end{remark}

\subsection{Existence criteria for (simplified)  Gramians}

In this section, we establish sufficient criteria for the existence of reachability and observability Gramians. In fact, the existence of simplified Gramian can be linked to Lyapunov criteria for asymptotic exponential stability, whereas the full Gramians exist under a one-sided Lipschitz property with a negative constant.\smallskip

First, we need to introduce the control vector
$\tilde u:= \begin{bmatrix}\tilde u_1 & \ldots & \tilde u_m\end{bmatrix}^\top$, where
\begin{align}\label{multipl_control}
\tilde u_i\equiv \begin{cases}0&\text{ if } g_i\equiv 0,\\
  u_i&\text{ else.}
  \end{cases}
\end{align}
This function $\tilde u$ only contains the controls that enter the multiplicative part in the drift. It is often important to distinguish between the additive and the multiplicative controls like in the following stability result. It contains a Lyapunov criterion for mean square asymptotic stability that ensures existence of the simplified Gramians.
\begin{thm}\label{thm_global_stab}
Suppose that $M_j=B=0$ in \eqref{stochstatenew} for all $j\in \{1, \dots, q\}$. We assume that the multiplicative controls, defined in \eqref{multipl_control}, are deterministic and satisfy $\|\tilde u\|_{L^2}^2:= \int_0^\infty \|\tilde u(t)\|^2 dt<\infty$. Further, let $X>0$ be a positive definite matrix, $V_X(x)=x^\top X x$ and $LV_{X}$ given by Proposition \ref{prop_lyap}. If we have
\begin{align}\label{stab_Lyap}
LV_{X}(x, 0; \delta)\leq -\lambda  V_{X}(x)                                                                                                                                                                                                                                                                                                                                                                                                                                                                                                                                                                                                                                                                                                        \end{align}
for some $\lambda>0$, all $x\in \mathbb R^n$ and some $\delta>0$. Then, there is a constant $k>0$, so that
\begin{align*}
\mathbb E \left\|x(t)\right\|^2\leq k \expn^{(\|\tilde u\|_{L^2}/\delta)^2 }\left\|x_0\right\|^2  \expn^{-\lambda t}
\end{align*}
for all $t\geq 0$ and initial values $x_0\in\mathbb R^n$.
\end{thm}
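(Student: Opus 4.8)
The plan is to apply the Itô formula to the quadratic Lyapunov function $V_X(x(t)) = x(t)^\top X x(t)$ along the solution of \eqref{stochstatenew} under the standing hypotheses $M_j = B = 0$, and then to reduce the resulting expectation inequality to a scalar differential inequality that can be integrated via a Gronwall-type argument. First I would compute $d V_X(x(t))$: since $dx = [f(x) + G(x)\tilde u]\,dt + \Gamma(x)\,dw$ (note $G(x)u = G(x)\tilde u$ because only the components of $u$ multiplying nonzero $g_i$ survive, and $Bu$, $M(u)$ drop out), Itô's formula gives a drift term $2\langle X x, f(x)\rangle + 2\langle X x, G(x)\tilde u\rangle + \trace(\Gamma(x) K \Gamma(x)^\top X)$ plus a martingale part $2\langle X x, \Gamma(x)\,dw\rangle$ whose expectation vanishes. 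Recognizing that $2\langle X x, f(x)\rangle + \trace(\Gamma(x)K\Gamma(x)^\top X)$ is precisely $L V_X(x,0;\delta) - \delta^2\|X^{1/2}G(x)\|_F^2$ by Proposition \ref{prop_lyap} with $y=0$, I would bound the first part using hypothesis \eqref{stab_Lyap} and the second part keeps the full $\delta^2\|X^{1/2}G(x)\|_F^2$ term available to absorb the cross term.

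The key estimate is the cross term $2\langle X x, G(x)\tilde u\rangle$. Writing $X^{1/2} x$ and $X^{1/2} G(x)$ and applying Cauchy--Schwarz together with Young's inequality (with weight $\delta$), I expect
\begin{align*}
2\langle X x, G(x)\tilde u\rangle \leq 2 \|X^{1/2} x\| \cdot \|X^{1/2} G(x)\| \cdot \|\tilde u\| \leq \frac{\|\tilde u\|^2}{\delta^2}\, V_X(x) + \delta^2 \|X^{1/2} G(x)\|_F^2.
\end{align*}
The $\delta^2 \|X^{1/2}G(x)\|_F^2$ on the right cancels the one released from the Itô drift, and what remains is
\[
\frac{d}{dt}\,\mathbb E V_X(x(t)) \leq \left(-\lambda + \frac{\|\tilde u(t)\|^2}{\delta^2}\right) \mathbb E V_X(x(t)).
\]
Gronwall's lemma then yields $\mathbb E V_X(x(t)) \leq V_X(x_0)\exp\!\left(-\lambda t + \delta^{-2}\int_0^t \|\tilde u(s)\|^2 ds\right) \leq V_X(x_0)\, \expn^{(\|\tilde u\|_{L^2}/\delta)^2} \expn^{-\lambda t}$, and finally $\lambda_{\min}(X)\|x\|^2 \leq V_X(x) \leq \lambda_{\max}(X)\|x\|^2$ converts this into the claimed bound on $\mathbb E\|x(t)\|^2$ with $k = \lambda_{\max}(X)/\lambda_{\min}(X)$.

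The main obstacle is making the Itô-formula step fully rigorous: one must justify that the local martingale $\int_0^t 2\langle X x(s), \Gamma(x(s))\,dw(s)\rangle$ is a true martingale (or at least that its expectation is zero), which requires an integrability estimate on $x(s)$ that a priori is only guaranteed locally by the well-posedness assumptions on $f, g_i, \gamma_j$. The standard remedy is to introduce stopping times $\tau_R = \inf\{t : \|x(t)\| \geq R\}$, run the entire argument on $[0, t\wedge\tau_R]$ where everything is bounded, obtain the differential inequality for $\mathbb E V_X(x(t\wedge\tau_R))$, and then pass $R\to\infty$ using Fatou's lemma on the left and monotone/dominated convergence on the right; the deterministic and $L^2$-finite nature of $\tilde u$ ensures the exponential factor stays finite and $R$-independent throughout. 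A secondary technical point is confirming that a global solution exists on $[0,\infty)$ under the present hypotheses — the a priori bound just derived (for each $R$, uniform in $R$) itself precludes finite-time blow-up, so non-explosion comes out of the same estimate.
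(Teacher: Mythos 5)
Your proposal is correct and follows essentially the same route as the paper: Itô's formula applied to $V_X$, the Young-type estimate $2\langle Xx, G(x)\tilde u\rangle \leq \delta^2\|X^{1/2}G(x)\|_F^2 + V_X(x)\|\tilde u\|^2/\delta^2$ to reassemble $LV_X(x,0;\delta)$, hypothesis \eqref{stab_Lyap}, Gronwall, and norm equivalence. The only addition is your explicit localization argument with stopping times for the martingale term, which the paper leaves implicit; this is a welcome technical refinement rather than a different approach.
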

\begin{proof}
 We apply Lemma \ref{lemstochdiff} with $a(t)=f(x(t))+G(x(t))u(t)$, $B(t)= \Gamma(x(t))$ and $V(x)=V_X(x)=x^\top X x$. Hence, we  obtain \begin{align*}
 dV_X(x(t))=&\left[2\left\langle X x(t), f(x(t))+G(x(t))u(t)\right\rangle +  \trace\left(\Gamma(x(t)) K \Gamma(x(t))^\top X\right) \right] dt\\
 &+    2\left\langle X x(t), \Gamma(x(t))dw(t)\right\rangle.                                                                                                                      \end{align*}
 We take the expectation and use that the Ito integral has mean zero. This results in \begin{align*}
 \frac{d}{dt}\mathbb E[V_X(x(t))]=\mathbb E\left[2\left\langle X x(t), f(x(t))+G(x(t))u(t)\right\rangle +  \trace\left(\Gamma(x(t)) K \Gamma(x(t))^\top X\right) \right]                                                                                                                     \end{align*}
 for almost all $t\geq 0$.  We exploit that \begin{align}\nonumber
   2\left\langle X x(t), G(x(t))u(t)\right\rangle &= 2\left\langle X^{\frac{1}{2}} x(t), X^{\frac{1}{2}} G(x(t))\tilde u(t)\right\rangle
   =2\trace\Big(X^{\frac{1}{2}} \delta G(x(t))(\tilde u(t)/\delta) x(t)^\top X^{\frac{1}{2}}\Big)\\ \nonumber
   &\leq  \delta^2\left\|X^{\frac{1}{2}} G(x(t))\right\|^2_F + \left\|(\tilde u(t)/\delta) x(t)^\top X^{\frac{1}{2}}\right\|^2_F\\ \label{estimate_on_G}
   &= \delta^2\left\|X^{\frac{1}{2}} G(x(t))\right\|^2_F + V_{X}(x(t)) (\left\|\tilde u(t)\right\|/\delta)^2.
                 \end{align}
Therefore, we obtain by Proposition \ref{prop_lyap} and \eqref{stab_Lyap} that \begin{align*}
 \frac{d}{dt}\mathbb E[V_X(x(t))]&\leq\mathbb E\left[LV_{X}(x(t), 0; \delta) \right] + \mathbb E[V_{X}(x(t))] (\left\|\tilde u(t)\right\|/\delta)^2\\
 &\leq \Big((\left\|\tilde u(t)\right\|/\delta)^2 -\lambda\Big)\mathbb E[V_{X}(x(t))]                                                                                                                   \end{align*}
 for almost all $t\geq 0$. Exploiting Lemma \ref{gron_dif} with $\beta(t)=(\left\|\tilde u(t)\right\|)/\delta)^2 -\lambda$, we find that $\mathbb E[V_X(x(t))] \leq V_X(x_0) \expn^{\int_0^t(\left\|\tilde u(s)\right\|/\delta)^2 -\lambda ds}$ for all $t\geq 0$. As ${V_X(\cdot)}^{\frac{1}{2}}=\|X^\frac{1}{2} \cdot\|$ is a norm, it is equivalent to the Euclidean norm $\left\|\cdot\right\|$ and therefore the result follows.
\end{proof}
If $G\equiv 0$ in Theorem \ref{thm_global_stab}, then the result is covered by the classical Lyapunov stability theory for (uncontrolled) stochastic differential equations, see \cite{staboriginal, mao}.
\begin{remark}
 Looking at Proposition \ref{prop_lyap}, we can write $LV_{X}(x, 0; \delta)=LV_{X}(x, 0; 0)+  \delta^2\|X^{\frac{1}{2}}G(x)\|_F^2$. Now, if there is a $\tilde \lambda>0$, so that $LV_{X}(x, 0; 0)\leq -\tilde \lambda  V_{X}(x)$ for all $x\in \mathbb R^n$ (implying mean square exponential stability for \eqref{stochstatenew} with $B=M_j=0$ and $G\equiv 0$ \cite{mao}) and $G$ satisfies a linear growth condition in some norm, i.e., $\|X^{\frac{1}{2}}G(x)\|_F\leq c \|X^{\frac{1}{2}}x\|$ for some suitable constant $c>0$ and all $x\in\mathbb R^n$.  Then, \begin{align*}
  LV_{X}(x, 0; \delta)\leq -\tilde \lambda  V_{X}(x)+  c^2 \delta^2\|X^{\frac{1}{2}}x\|^2 = -\lambda  V_{X}(x)                                                                                                                                                                                                                                                              \end{align*}
for all $x\in\mathbb R^n$ and $\lambda = \tilde \lambda-c^2\delta^2$. As $\lambda>0$ for a sufficiently small $\delta$,  the Lyapunov stability criterion \eqref{stab_Lyap} is satisfied. On the other hand, $G$ does not necessarily need to satisfy such a growth condition, as the following example demonstrates. Let $\circ$ denote component-wise multiplications/powers. We set $f(x)=Ax-x^{\circ 3}$, $m=q=K=1$, $G(x)=\Gamma(x)=x^{\circ 2}$ and $X=I$. Suppose further that $2\left\langle x, Ax\right\rangle\leq -\tilde \lambda \|x\|^2$ for all $x\in \mathbb R^n$ and some $\tilde \lambda >0$ (implying the Hurwitz property of $A$). Then, we have $ LV_{I}(x, 0; 0)  = 2\left\langle x, Ax\right\rangle -2 \sum_{i=1}^n x_i^4 + \sum_{i=1}^n x_i^4 \leq -\tilde \lambda  V_{I}(x)$ and \begin{align*}
   LV_{I}(x, 0; \delta)  = 2\left\langle x, Ax\right\rangle -2 \sum_{i=1}^n x_i^4 +(1+\delta^2) \sum_{i=1}^n x_i^4\leq  -\tilde \lambda  V_{I}(x)                                                                                                                                                                                                                                                              \end{align*}
for all $x\in \mathbb R^n$ if $0<\delta\leq 1$.
\end{remark}
A more general requirement than the Lyapunov criterion of Theorem \ref{thm_global_stab} is now used to ensure the existence of the full Gramians. In \cite{gen_incr_bt} (deterministic case with $G\equiv 0$ and $h$ linear) such a requirement is also used to ensure the existence of Gramians. There, it is called quadratic incremental stability.
\begin{thm}\label{thm_gram_exist}
Let us assume that the expression defined by Proposition \ref{prop_lyap} satisfies \begin{align}\label{exist_gram_prop}
LV_{X}(x, y; \delta)\leq -\lambda  V_{X}(x-y)                                                                                                                                                                                                                                                                                                                                                                                                                                                                                                                                                                                                                                                                                                        \end{align}
for all $x, y\in \mathbb R^n$, a positive definite matrix $X$, $\lambda>0$ and some $\delta>0$.
\begin{itemize}
 \item
A Gramian $P$ in Definition \ref{def_P} exists if $\Gamma$ is globally Lipschitz continuous in some norm, i.e., there exists a constant $c>0$ such that \begin{align}\label{lip_gamma}
 \|X^{\frac{1}{2}}(\Gamma(x)-\Gamma(y))\|_F\leq c \|X^{\frac{1}{2}}(x-y)\|                                                                                                                                                                                                                                                                                                                                                                                                                                                                                                     \end{align}                                                                                                                                                                                                                                                 for all $x, y\in \mathbb R^n$. In case \eqref{exist_gram_prop} and \eqref{lip_gamma} are true for $y=0$, i.e., \eqref{stab_Lyap} holds and $\Gamma$ satisfies a linear growth condition, then a simplified Gramian $P$ is well-defined.
 \item
A Gramian $Q$ according to Definition \ref{def_Q} exist if
\begin{align}\label{lip_h}
 \|X^{\frac{1}{2}}(h(x)-h(y))\|\leq c \|X^{\frac{1}{2}}(x-y)\|                                                                                                                                                                                                                                                                                                                                                                                                                                                                                                   \end{align}                                                                                                                                                                                                                                                 for all $x, y\in \mathbb R^n$. If \eqref{exist_gram_prop} and \eqref{lip_h} are satisfied for $y=0$, then a simplified Gramian $Q$ is well-defined.
\end{itemize}
\end{thm}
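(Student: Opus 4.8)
The plan is to exhibit explicit matrices that satisfy the defining inequalities in Definitions \ref{def_P} and \ref{def_Q}, using the hypothesis \eqref{exist_gram_prop} as the source of a strictly negative "budget" $-\lambda V_X(x-y)$ that we then spend on the quadratic right-hand sides. I would treat the $Q$-case first since it is the cleaner one. Set $Q=\alpha X$ for a scalar $\alpha>0$ to be chosen. Because $L$ is linear in the matrix argument of $V$ (clear from Proposition \ref{prop_lyap}), we have $LV_{\alpha X}=\alpha LV_X$, so \eqref{exist_gram_prop} gives $LV_{\alpha X}(x,y;\delta)\le -\alpha\lambda V_X(x-y)=-\lambda V_{\alpha X}(x-y)$. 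On the other hand \eqref{lip_h} reads $(h(x)-h(y))^\top(h(x)-h(y))=\|X^{1/2}(h(x)-h(y))\|^2\le c^2\|X^{1/2}(x-y)\|^2=c^2 V_X(x-y)$. Hence choosing $\alpha\ge c^2/\lambda$ yields $LV_Q(x,y;\delta)\le -\lambda V_{\alpha X}(x-y)\le -c^2 V_X(x-y)\le -(h(x)-h(y))^\top(h(x)-h(y))$, which is exactly \eqref{inequalityQ}; $Q=\alpha X$ is positive semidefinite as required. The $y=0$ specialization is immediate: if \eqref{exist_gram_prop} and \eqref{lip_h} hold only at $y=0$, the same computation at $y=0$ produces a simplified $Q$.

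For the reachability Gramian I would try $P=\beta X^{-1}$, equivalently $P^{-1}=\beta^{-1}X$, with $\beta>0$ chosen large. The first requirement is that $U_{P^{-1}}=\mathcal U-\sum_{i,j}M_i^\top P^{-1}M_j k_{ij}=\mathcal U-\beta^{-1}\sum_{i,j}M_i^\top X M_j k_{ij}$ be positive definite; since $\mathcal U$ is a free positive definite parameter, we may simply take $\mathcal U$ large enough (or $\beta$ large enough) so that $U_{P^{-1}}\ge \tfrac12\mathcal U>0$, and in particular $U_{P^{-1}}^{-1}\le 2\mathcal U^{-1}$. For the inequality \eqref{inequalityP}, the left-hand side is $LV_{P^{-1}}(x,y;\delta)=\beta^{-1}LV_X(x,y;\delta)\le -\beta^{-1}\lambda V_X(x-y)$. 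The right-hand side is $-S_{P^{-1}}(x,y)^\top U_{P^{-1}}^{-1}S_{P^{-1}}(x,y)$, which is nonpositive, so the inequality is \emph{not} automatic; we must bound $S_{P^{-1}}$ in terms of $V_X(x-y)$. From \eqref{def_S}, $S_{P^{-1}}(x,y)=\beta^{-1}\big(B^\top X(x-y)+\sum_{i,j}M_i^\top X(\gamma_j(x)-\gamma_j(y))k_{ij}\big)$. Writing this as $\beta^{-1}X^{1/2}$ times bounded factors and using \eqref{lip_gamma} to control $\|X^{1/2}(\gamma_j(x)-\gamma_j(y))\|\le \|X^{1/2}(\Gamma(x)-\Gamma(y))\|_F\le c\|X^{1/2}(x-y)\|$, together with $\|X^{1/2}B\|$-type constants, we get $\|S_{P^{-1}}(x,y)\|\le \beta^{-1}C_1\|X^{1/2}(x-y)\|$ for an explicit $C_1=C_1(B,M_j,k_{ij},X,c)$. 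Therefore $S_{P^{-1}}^\top U_{P^{-1}}^{-1}S_{P^{-1}}\le \beta^{-2}C_1^2\,\|U_{P^{-1}}^{-1}\|\,V_X(x-y)\le 2\beta^{-2}C_1^2\|\mathcal U^{-1}\|\,V_X(x-y)$. Comparing with the left-hand bound, \eqref{inequalityP} holds provided $\beta^{-1}\lambda\ge 2\beta^{-2}C_1^2\|\mathcal U^{-1}\|$, i.e. $\beta\ge 2C_1^2\|\mathcal U^{-1}\|/\lambda$; note $C_1$ does not depend on $\beta$, so this is a genuine choice. For the simplified version one repeats the estimate at $y=0$, where \eqref{lip_gamma} at $y=0$ is precisely the linear growth bound on $\Gamma$ and \eqref{exist_gram_prop} at $y=0$ is \eqref{stab_Lyap}.

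I expect the only real subtlety to be the bookkeeping in the reachability case: one must be careful that the constant $C_1$ in the bound on $S_{P^{-1}}$ is independent of the scaling parameter $\beta$ (it is, because both $S_{P^{-1}}$ and $P^{-1}$ scale like $\beta^{-1}$, leaving a net $\beta^{-1}$ in $S_{P^{-1}}$ against a $\beta^{-2}$ in the quadratic form), so that the required inequality $\beta^{-1}\lambda\ge 2\beta^{-2}C_1^2\|\mathcal U^{-1}\|$ can indeed be met by taking $\beta$ large. A secondary point is the interplay between the choices of $\mathcal U$ and $\beta$ in ensuring $U_{P^{-1}}>0$; since $\mathcal U$ is free, either lever works, but the cleanest exposition fixes $\mathcal U$ first, then enlarges $\beta$. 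Everything else reduces to the linearity of $V_X\mapsto LV_X$ in $X$ already recorded in Proposition \ref{prop_lyap} and to elementary Cauchy–Schwarz/Frobenius-norm estimates, so no further machinery is needed.
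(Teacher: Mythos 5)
Your proposal is correct and follows essentially the same route as the paper: the paper takes $P=(\epsilon X)^{-1}$ and $Q=\epsilon^{-1}X$ with $\epsilon$ small (your $\beta=\alpha=1/\epsilon$ large), exploits linearity of $L$ in the matrix argument, bounds $S_{P^{-1}}$ via Cauchy--Schwarz and \eqref{lip_gamma}, and controls $U_{P^{-1}}$ through its smallest eigenvalue. The only slip is your identity $(h(x)-h(y))^\top(h(x)-h(y))=\|X^{1/2}(h(x)-h(y))\|^2$, which is false as an equality but harmless since, exactly as in the paper, equivalence of norms gives $\|h(x)-h(y)\|^2\le c_X^2\|X^{1/2}(x-y)\|^2$ and the constant is absorbed into the choice of $\alpha$.
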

\begin{proof}
 We define $P=(\epsilon X)^{-1}$ for a suitable $\epsilon>0$ and multiply both sides of \eqref{exist_gram_prop} with $\epsilon$ resulting in $LV_{P^{-1}}(x, y; \delta)\leq -\lambda  V_{P^{-1}}(x-y)$. Let $c_{\min}>0$ be the smallest eigenvalue of $\mathcal{U}$ introduced in Proposition \ref{prop_U_S} and let $\mu_{\max}\geq 0$ be the  largest eigenvalue of $\sum_{i, j=1}^q M_i^\top X M_j k_{ij}$. Then, $c_{\min}-\epsilon \mu_{\max}$ is a lower bound for all eigenvalues of $U_{P^{-1}}= \mathcal U - \epsilon\sum_{i, j=1}^q M_i^\top X M_j k_{ij}$. Hence, $U_{P^{-1}}$ is positive definite, if we choose an $\epsilon>0$ ensuring $c_{\min}-\epsilon \mu_{\max}>0$. Now, the largest eigenvalue of $U_{P^{-1}}^{-1}$ is bounded from above by $\frac{1}{c_{\min}-\epsilon \mu_{\max}}$                                                                                                                                                                                                                                                                                                                                                                                                                                                                                                                                                                                                                                                                                                                                                                                                                                                                 leading to
 \begin{align}\label{some_est}
 -S_{P^{-1}}(x, y)^\top  U_{P^{-1}}^{-1} S_{P^{-1}}(x, y)&\geq - S_{P^{-1}}(x, y)^\top S_{P^{-1}}(x, y)  \frac{1}{c_{\min}-\epsilon \mu_{\max}}.                                                                                                                                                                                                                                                                                                                                                                                                                                                                                                                                                                                                                                                                                                                                                                                                                                                                                           \end{align}
Now, it holds that \begin{align*}
  S_{P^{-1}}(x, y)^\top S_{P^{-1}}(x, y) &=  \|B^\top P^{-1} (x-y)+ \sum_{i, j=1}^q  M_i^\top P^{-1} (\gamma_j(x)-\gamma_j(y)) k_{ij}\|^2\\
  &\leq 2\epsilon^2\Big(\|B^\top X (x-y)\|^2+ \|\sum_{i, j=1}^q  M_i^\top X (\gamma_j(x)-\gamma_j(y)) k_{ij}\|^2\Big).
               \end{align*}
We obtain that $\|B^\top X (x-y)\|\leq c_B \|X^{\frac{1}{2}} (x-y)\|$ for some $c_B>0$ and \begin{align*}                                                                                            \|\sum_{i, j=1}^q  M_i^\top X (\gamma_j(x)-\gamma_j(y)) k_{ij}\| &\leq                                                                                            \sum_{i, j=1}^q  \|{X^\frac{1}{2}}M_i\| \vert k_{ij}\vert \|X^{\frac{1}{2}} (\gamma_j(x)-\gamma_j(y))\|\\
& \leq                                                                                            c_M\|X^{\frac{1}{2}}(x-y)\|
\end{align*}
for a suitable constant $c_M>0$ applying \eqref{lip_gamma}. Therefore, we have \begin{align*}
  S_{P^{-1}}(x, y)^\top S_{P^{-1}}(x, y) \leq 2\epsilon^2(c_B^2+c_M^2)\|X^{\frac{1}{2}}(x-y)\|^2 = 2\epsilon(c_B^2+c_M^2) V_{P^{-1}}(x-y).
               \end{align*}
Inserting this into \eqref{some_est} yields \begin{align*}
 -S_{P^{-1}}(x, y)^\top  U_{P^{-1}}^{-1} S_{P^{-1}}(x, y) \geq - \epsilon \frac{2(c_B^2+c_M^2)}{c_{\min}-\epsilon \mu_{\max}}V_{P^{-1}}(x-y)\geq -\lambda V_{P^{-1}}(x-y)                                                                                                                                                                                                                                                                                                                                                                                                                                                                                                                                                                                                                                                                                                                                                                                                                                                                                         \end{align*}
 given a sufficiently small $\epsilon>0$. This proves that $P=(\epsilon X)^{-1}$ is  reachability Gramian for $\epsilon$ being small enough. Repeating the same steps for $y=0$, we see that the simplified version of the Gramian $P$ exists. For the second part of the proof, let us define $Q=\epsilon^{-1} X$. Now, we multiply both sides of \eqref{exist_gram_prop} with $\epsilon^{-1}$ yielding $LV_{Q}(x, y; \delta)\leq -\lambda  V_{Q}(x-y)$. We obtain that \begin{align*}
 -(h(x)- h(y))^\top (h(x)- h(y)) &= - \|h(x)- h(y)\|^2 \geq -c_X^2 \|X^{\frac{1}{2}}(x-y)\|^2\\
 &=  -\epsilon c_X^2 \|Q^{\frac{1}{2}}(x-y)\|^2 = -\epsilon c_X^2 V_{Q}(x-y)
                      \end{align*}
for some $c_X>0$ and all $x, y\in\mathbb R^n$ combining \eqref{lip_h} with the equivalence of the norms. Consequently, we find that $LV_{Q}(x, y; \delta)\leq  -(h(x)- h(y))^\top (h(x)- h(y))$ if $\epsilon$ is chosen, such that $\epsilon c_X^2\leq \lambda$. Therefore, $Q=\epsilon^{-1} X$ is an observability Gramian if $\epsilon$ is small enough. Repeating the same steps for the case of $y=0$, we see that the simplified observability Gramian is well-defined. This concludes the proof.
\end{proof}
\begin{remark}
Condition \eqref{exist_gram_prop} is a one-sided Lipschitz property of the coefficients $(f, G, \Gamma)$. Theorem \ref{thm_gram_exist} tells us that the Gramians $P$ and $Q$ exist if we find an inner product (and a corresponding norm), in which the one-sided Lipschitz constant is negative  additionally given that $\Gamma$ and $h$ are globally Lipschitz. For the simplified Gramian case, the one-sided Lipschitz property becomes a Lyapunov stability criterion \eqref{stab_Lyap} while $\Gamma$ and $h$ only have to satisfy a linear growth condition.
\end{remark}

\subsection{Dominant subspace detection using Gramians}\label{sec_dom_sub}

\subsubsection{Directions with low contributions to the state vector}\label{subsecP}

If $(p_k)$ is an ONB of $\mathbb R^n$, then the state variable can be written as \begin{align}\label{eigen_rep}
x(t)= \sum_{k=1}^n \langle x(t), p_{k}
\rangle \,p_k.                                                                                                                                                                                                                                                                                                                                                                                                                                                                                                                                                                                                                                                                                                                                                                                                                                                                                                                                                                                                                                                        \end{align}
Our goal is to choose a particular basis $(p_k)$ associated to the (simplified) reachability Gramian $P$ introduced in Definition \ref{def_P} and identify those directions $p_k$ that barely contribute to $x$. This is the basis for a truncation procedure leading to a reduced order model.
\begin{thm}\label{energy_est}
Let $P$ be a simplified reachability Gramian and $\mathcal U$ be the positive definite matrix introduced in Definition \ref{def_P}. Further,  $x$ denotes the solution of \eqref{stochstatenew}. Let $(p_k, \lambda_{k})$ be a basis of eigenvectors of $P$, yielding \eqref{eigen_rep}, equipped with the eigenvalues $\lambda_{k}=\lambda_k(\delta)>0$, $\delta>0$. Assuming that $x_0=0$ and that $\tilde u$, defined in \eqref{multipl_control}, is deterministic, we have
\begin{align*}
\sup_{t\in[0, T]}\mathbb E \langle x(t), p_{k}  \rangle^2 &\leq \lambda_{k} \exp{\left\{\|\tilde u\|_{L^2_T}^2/\delta^2 \right\}}\|\mathcal U^{\frac{1}{2}}  u\|_{L^2_T}^2.
\end{align*}
\end{thm}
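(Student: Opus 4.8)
The plan is to use the quadratic form $V_{P^{-1}}(x)=x^\top P^{-1}x$ as a stochastic Lyapunov function, control $\mathbb E V_{P^{-1}}(x(t))$ via a Grönwall-type argument, and then descend to the individual coordinates by the eigendecomposition of $P$. For this last descent, note that $Pp_k=\lambda_k p_k$ gives $P^{-1}p_k=\lambda_k^{-1}p_k$, so expanding $x$ as in \eqref{eigen_rep} and using that $(p_k)$ is an ONB yields $V_{P^{-1}}(x)=\sum_{j=1}^n\lambda_j^{-1}\langle x,p_j\rangle^2\ge \lambda_k^{-1}\langle x,p_k\rangle^2$; hence $\mathbb E\langle x(t),p_k\rangle^2\le \lambda_k\,\mathbb E V_{P^{-1}}(x(t))$. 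It therefore suffices to prove $\sup_{t\in[0,T]}\mathbb E V_{P^{-1}}(x(t))\le \exp\{\|\tilde u\|_{L^2_T}^2/\delta^2\}\,\|\mathcal U^{\frac{1}{2}}u\|_{L^2_T}^2$.

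Next I would apply It\^o's formula (Lemma \ref{lemstochdiff}) to $V_{P^{-1}}$ along the solution of \eqref{stochstatenew}, with drift $f(x)+Bu+G(x)u$ and diffusion coefficient $\Gamma(x)+M(u)$, take expectations (the It\^o integral has mean zero), and differentiate in $t$. Using the normalization $f(0)=G(0)=\Gamma(0)=0$, I would regroup the drift: expand
\[
\trace\!\big((\Gamma(x)+M(u))K(\Gamma(x)+M(u))^\top P^{-1}\big)=\|P^{-\frac{1}{2}}\Gamma(x)K^{\frac{1}{2}}\|_F^2+2\,\trace\!\big(\Gamma(x)KM(u)^\top P^{-1}\big)+\trace\!\big(M(u)KM(u)^\top P^{-1}\big),
\]
combine $2\langle P^{-1}x,Bu\rangle$ with the middle term to $2u^\top S_{P^{-1}}(x,0)$ via \eqref{traceS_rel}, and rewrite the last term as $u^\top\mathcal U u-u^\top U_{P^{-1}}u$ via \eqref{traceU_rel}. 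For the multiplicative drift control I would reuse the Young-type bound behind \eqref{estimate_on_G}: since $G(x)u=G(x)\tilde u$, one has $2\langle P^{-1}x,G(x)u\rangle=2\langle P^{-\frac{1}{2}}x,P^{-\frac{1}{2}}G(x)\tilde u\rangle\le \delta^2\|P^{-\frac{1}{2}}G(x)\|_F^2+V_{P^{-1}}(x)\,\|\tilde u\|^2/\delta^2$. The terms $2\langle P^{-1}x,f(x)\rangle$, $\delta^2\|P^{-\frac{1}{2}}G(x)\|_F^2$ and $\|P^{-\frac{1}{2}}\Gamma(x)K^{\frac{1}{2}}\|_F^2$ together are exactly $LV_{P^{-1}}(x,0;\delta)$ from Proposition \ref{prop_lyap}.

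Since $P$ is a simplified reachability Gramian, Definition \ref{def_P} gives $LV_{P^{-1}}(x,0;\delta)\le -S_{P^{-1}}(x,0)^\top U_{P^{-1}}^{-1}S_{P^{-1}}(x,0)$ with $U_{P^{-1}}>0$, and the residual control contribution, writing $S:=S_{P^{-1}}(x,0)$, $U:=U_{P^{-1}}$, is
\[
-S^\top U^{-1}S+2u^\top S-u^\top U u+u^\top\mathcal U u=-\|U^{-\frac{1}{2}}S-U^{\frac{1}{2}}u\|^2+u^\top\mathcal U u\le \|\mathcal U^{\frac{1}{2}}u\|^2.
\]
Hence $\frac{d}{dt}\mathbb E V_{P^{-1}}(x(t))\le (\|\tilde u(t)\|^2/\delta^2)\,\mathbb E V_{P^{-1}}(x(t))+\mathbb E\|\mathcal U^{\frac{1}{2}}u(t)\|^2$ for a.e.\ $t$, with $\mathbb E V_{P^{-1}}(x(0))=0$ since $x_0=0$. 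An inhomogeneous version of Lemma \ref{gron_dif} then yields $\mathbb E V_{P^{-1}}(x(t))\le \int_0^t \exp\{\int_s^t\|\tilde u(r)\|^2/\delta^2\,dr\}\,\mathbb E\|\mathcal U^{\frac{1}{2}}u(s)\|^2\,ds$; bounding the exponential by $\exp\{\|\tilde u\|_{L^2_T}^2/\delta^2\}$ and the integral by $\|\mathcal U^{\frac{1}{2}}u\|_{L^2_T}^2$ for $t\in[0,T]$ gives the required estimate for $\sup_{t\in[0,T]}\mathbb E V_{P^{-1}}(x(t))$, which combined with the first step concludes the proof.

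I expect the main obstacle to be the drift bookkeeping: checking that the stochastic term together with the additive and multiplicative control terms reassemble precisely into $LV_{P^{-1}}(x,0;\delta)$ plus the perfect square $-\|U^{-\frac{1}{2}}S-U^{\frac{1}{2}}u\|^2$ plus $\|\mathcal U^{\frac{1}{2}}u\|^2$, so that the simplified-Gramian inequality becomes applicable; in particular the $G$-control estimate must land exactly on the $\delta^2\|P^{-\frac{1}{2}}G(x)\|_F^2$ summand. The Grönwall step (in its inhomogeneous form) and the coordinate projection are routine, as is verifying that the standing assumptions on $(f,G,\Gamma)$ justify It\^o's formula and the vanishing expectation of the It\^o integral.
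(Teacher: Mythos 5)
Your proposal is correct and follows essentially the same route as the paper: It\^o's formula for $V_{P^{-1}}$, the Young-type bound on the $G(x)u$ term, reassembly via \eqref{traceU_rel}--\eqref{traceS_rel} into $LV_{P^{-1}}(x,0;\delta)$ plus a completed square, the simplified-Gramian inequality, Gronwall, and finally the eigenvector projection $\langle x,p_k\rangle^2\le\lambda_k V_{P^{-1}}(x)$. The only cosmetic difference is that you use an inhomogeneous differential Gronwall step, whereas the paper integrates first and applies the integral form (Lemma \ref{gron_int}) with the non-decreasing function $t\mapsto\|\mathcal U^{1/2}u\|_{L^2_t}^2$; both yield the same bound.
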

\begin{proof}
We exploit Lemma \ref{lemstochdiff} with $a(t)=f(x(t))+Bu(t)+G(x(t))u(t)$, $B(t)= \Gamma(x(t))+M(u(t))$ and $V(x)=V_{P^{-1}}(x)=x^\top P^{-1} x$. Taking the expectation and calculating $D V_{P^{-1}}(x)=2 P^{-1}x$ as well as $D^2 V_{P^{-1}}(x)=2 P^{-1}$, we  obtain \begin{align}\label{eq_after_ito_exp}
 \frac{d}{dt}\mathbb E[V_{P^{-1}}(x(t))]=&\mathbb E\left[2\left\langle P^{-1} x(t), f(x(t))+Bu(t)+G(x(t))u(t)\right\rangle\right]\\ \nonumber
 &+  \mathbb E\left[\trace\left((\Gamma(x(t))+M(u(t))) K (\Gamma(x(t))+M(u(t)))^\top P^{-1}\right) \right].                                                                                                                    \end{align}
We establish the same estimate like in \eqref{estimate_on_G} and hence have \begin{align}\label{est_bil_term}
   2\left\langle P^{-1} x(t), G(x(t))u(t)\right\rangle\leq \delta^2\left\|P^{-\frac{1}{2}} G(x(t))\right\|^2_F + V_{P^{-1}}(x(t)) (\left\|\tilde u(t)\right\|/\delta)^2.
                 \end{align}
We integrate \eqref{eq_after_ito_exp} over $[0, t]$, use inequality \eqref{est_bil_term} and Proposition \ref{prop_lyap} yielding
\begin{align}\label{first_ernergy_est}
 \mathbb E[V_{P^{-1}}(x(t))]&\leq  \int_0^t \mathbb E\left[LV_{P^{-1}}(x(s), 0; \delta)\right]+2  \mathbb E\left[\left\langle P^{-1} x(s), Bu(s)\right\rangle \right]\\
 \nonumber &\quad \quad+ \mathbb E\left[2 \trace\left(\Gamma(x(s)) K M(u(s))^\top P^{-1}\right)+ \trace\left(M(u(s)) K M(u(s))^\top P^{-1}\right)\right]ds\\ \nonumber
 &\quad  + \int_0^t \mathbb E[V_{P^{-1}}(x(s))] (\left\|\tilde u(s)\right\|/\delta)^2 ds.
 \end{align}
We insert \eqref{traceU_rel} and \eqref{traceS_rel} with $z=u(s)$, $x=x(s)$, $y=0$ and $X=P^{-1}$ into \eqref{first_ernergy_est}. This yields
 \begin{align*}
\mathbb E[V_{P^{-1}}(x(t))] &\leq  \int_0^t \mathbb E\left[LV_{P^{-1}}(x(s), 0; \delta)\right]+2 \mathbb E\left[u(s)^\top S_{P^{-1}}(x(s), 0)  \right]\\
 &\quad \quad + \mathbb E\left[u(s)^\top \mathcal U u(s)\right]-\mathbb E\left[u(s)^\top U_{P^{-1}} u(s)\right]ds\\
 &\quad  + \int_0^t \mathbb E[V_{P^{-1}}(x(s))] (\left\|\tilde u(s)\right\|/\delta)^2 ds.
 \end{align*}
 We insert \eqref{inequalityP} into the above inequality and obtain
 \begin{align*}
\mathbb E[V_{P^{-1}}(x(t))] &\leq  \int_0^t \mathbb E\left[-S_{P^{-1}}(x(s), 0)^\top U_{P^{-1}}^{-1} S_{P^{-1}}(x(s),0)\right]+2 \mathbb E\left[u(s)^\top S_{P^{-1}}(x(s), 0)  \right]\\
 &\quad \quad + \mathbb E\left[u(s)^\top \mathcal U u(s)\right]-\mathbb E\left[u(s)^\top U_{P^{-1}} u(s)\right]ds\\
 &\quad  + \int_0^t \mathbb E[V_{P^{-1}}(x(s))] (\left\|\tilde u(s)\right\|/\delta)^2 ds\\
 &=\|\mathcal U^{\frac{1}{2}} u\|_{L^2_t}^2- \mathbb E \int_0^t \|U_{P^{-1}}^{-\frac{1}{2}} S_{P^{-1}}(x(s), 0)-U_{P^{-1}}^\frac{1}{2} u(s)\|^2 ds\\
 &\quad+\int_0^t \mathbb E[V_{P^{-1}}(x(s))] (\left\|\tilde u(s)\right\|/\delta)^2 ds\\
 &\leq \|\mathcal U^{\frac{1}{2}} u\|_{L^2_t}^2 +\int_0^t \mathbb E[V_{P^{-1}}(x(s))] (\left\|\tilde u(s)\right\|/\delta)^2 ds
 \end{align*}
for all $t\in[0, T]$ using that $U_{P^{-1}}$ is positive definite. Gronwall's inequality in Lemma \ref{gron_int} yields \begin{align*}
  \mathbb E[V_{P^{-1}}(x(t))] \leq \|\mathcal U^{\frac{1}{2}} u\|_{L^2_t}^2 \expn^{\int_0^t (\left\|\tilde u(s)\right\|/\delta)^2 ds}\leq \|\mathcal U^{\frac{1}{2}} u\|_{L^2_T}^2 \expn^{(\left\|\tilde u\right\|_{L^2_T}/\delta)^2}                                                                                                                        \end{align*}
for all $t\in[0, T]$. On the other hand, we find that
 \begin{align*}
 \langle x(t), p_{k}  \rangle^2 &\leq \lambda_{k}\; \sum_{i=1}^n \lambda_{i}^{-1} \langle x(t), p_{i}
\rangle^2 =\lambda_{k} \Big\|\sum_{i=1}^n \lambda_{i}^{-\frac{1}{2}} \langle x(t), p_{i}  \rangle \;p_{i}\Big\|^2
 =\lambda_{k} \Big\|P^{-\frac{1}{2}} x(t)\Big\|^2\\
 &= \lambda_{k} \; V_{P^{-1}}(x(t)).
\end{align*}
This concludes the proof.
\end{proof}
Theorem \ref{energy_est} is crucial for detecting direction $p_k$ that are of low relevance. We can directly see from this result that
 the coefficient $\langle x(\cdot), p_{k}  \rangle$ is small if the eigenvalue $\lambda_k$ is small. Based on \eqref{eigen_rep} this tells us that we can neglect the eigenvector $p_k$ if $\lambda_k$ is little. Thus, eigenspaces of $P$ associated to small eigenvalues can be truncated without causing a significant error. Moreover, it is interesting to notice that Theorem \ref{energy_est} relies on simplified reachability Gramians only. There seems to be no additional benefit in considering a more general full  reachability Gramian instead when making the above argument concerning dominant subspaces. However, we will see in Section \ref{sec_propo_red_sys} that the error bound in Theorem \ref{thm_error_bound} cannot be proved when a simplified Gramian $P$ is used. Instead we need a non simplified version of a reachability Gramian.

\subsubsection{Directions with low contributions to the output}\label{subsecQ}

Let $(q_k)$ be a basis of eigenvectors of the observability Gramian $Q$ introduced in Definition \ref{def_Q}, such that we can write \begin{align}\label{eigen_rep_Q}
x(t_0)= \sum_{k=1}^n \langle x(t_0), q_{k}
\rangle \,q_k.                                                                                                                                                                                                                                                                                                                                                                                                                                                                                                                                                                                                                                                                                                                                                                                                                                                                                                                                                                                                                                                        \end{align}
We ask two questions: \begin{itemize}
   \item[(i)] What is the impact of $\langle x(t_0), q_{k}
\rangle \,q_k$ to the output $y$ on an interval $[t_0, T]$ with $t_0\in [0, T)$ when $B=0$, $E=0$ and $M\equiv 0$? This means that we investigate how much output energy is produced when starting in $\langle x(t_0), q_{k}
\rangle \,q_k$ at $t_0$ if we switch off the additive controls.
                       \item[(ii)] How much is the output $y$ affected on $[t_0, T]$ if we remove $\langle x(t_0), q_{k} \rangle \,q_k$ in \eqref{eigen_rep_Q} at $t_0$?
                      \end{itemize}
We will point out that both questions are equivalent in the linear case. The first question is answered by using a simplified observability Gramian. The second, and probability more relevant question, can only be answered using a full observability Gramian.

Let us consider \eqref{original_system} starting at time $t_0\in [0, T)$, i.e.,
\begin{align}\nonumber
 \mathbf x(t)&=\mathbf x_{t_0}+\int_{t_0}^t[f(\mathbf x(s))+Bu(s)+ G(\mathbf x(s))u(s)]ds+\int_{t_0}^t[\Gamma(\mathbf x(s))+M(u(s))]dw(s),\\ \label{stochstate_mod_in}
            \mathbf y(t) &= h(\mathbf x(t))+E u(t),\quad t\in [t_0, T].
\end{align}
Clearly, we have $x(t)=\mathbf x(t)$ for all $t\geq t_0$ by the additivity of the integrals if $\mathbf x_{t_0}=x(t_0)$. We use \eqref{stochstate_mod_in} to clarify which directions in $x(t_0)$ are less relevant for $y$ on $[t_0, T]$.
Vectors $q_k$ with a low contribution to the output shall be identified. Therefore, we begin with analyzing \eqref{stochstate_mod_in}, where we set $\mathbf x_{t_0}=\langle x(t_0), q_{k}
\rangle \,q_k$ to find an answer on how significant this part of $x(t_0)$ is.
\begin{thm}\label{energy_est2}
Let $Q$ be a simplified observability Gramian according to Definition \ref{def_Q} and  $\mathbf y$ be the output variable of system  \eqref{stochstate_mod_in}. We further assume for this system that $B=0$, $E=0$ and $M\equiv 0$. Let $(q_k, \mu_{k})$ be a basis of eigenvectors of $Q$, like in \eqref{eigen_rep_Q}, equipped with the eigenvalues $\mu_{k}=\mu_k(\delta)>0$, $\delta>0$. Assuming $\mathbf x_{t_0}=\langle x(t_0), q_{k}
\rangle q_k$ and that $\tilde u$, defined in \eqref{multipl_control}, is deterministic, we have
 \begin{align*}
   \int_{t_0}^T \mathbb E\left\|\mathbf y(s)\right\|^2 ds  \leq \mu_k   \mathbb E[\langle x(t_0), q_{k}\rangle^2] \exp\left\{\int_{t_0}^T \left\|\tilde u(s)\right\|^2 ds /\delta^2\right\}.
\end{align*}
\end{thm}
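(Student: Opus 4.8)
The plan is to follow the same Itô-plus-Gronwall scheme used in the proof of Theorem~\ref{energy_est}, but with the \emph{simplified observability} inequality \eqref{inequalityQ} (the case $y=0$) in place of the reachability inequality, and --- crucially --- to retain the output term on the right-hand side instead of discarding it. Since $B=E=0$ and $M\equiv 0$, the system \eqref{stochstate_mod_in} reduces to $d\mathbf x(t)=[f(\mathbf x(t))+G(\mathbf x(t))u(t)]\,dt+\Gamma(\mathbf x(t))\,dw(t)$ with $\mathbf y(t)=h(\mathbf x(t))$, started at the $\mathcal F_{t_0}$-measurable initial value $\mathbf x_{t_0}=\langle x(t_0),q_k\rangle q_k$. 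First I would apply Lemma~\ref{lemstochdiff} to $V_Q(\mathbf x)=\mathbf x^\top Q\mathbf x$ with $a(t)=f(\mathbf x(t))+G(\mathbf x(t))u(t)$ and $B(t)=\Gamma(\mathbf x(t))$, then take expectations so that the Itô integral has zero mean, obtaining (using $DV_Q(\mathbf x)=2Q\mathbf x$, $D^2V_Q(\mathbf x)=2Q$)
\begin{equation*}
\tfrac{d}{dt}\mathbb E[V_Q(\mathbf x(t))]=\mathbb E\big[2\langle Q\mathbf x(t),f(\mathbf x(t))+G(\mathbf x(t))u(t)\rangle+\trace\big(\Gamma(\mathbf x(t))K\Gamma(\mathbf x(t))^\top Q\big)\big]
\end{equation*}
for almost every $t\in[t_0,T]$.

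Next I would control the bilinear term $2\langle Q\mathbf x(t),G(\mathbf x(t))u(t)\rangle$ exactly as in \eqref{estimate_on_G}, i.e.\ by the Young-type bound $\leq\delta^2\|Q^{1/2}G(\mathbf x(t))\|_F^2+V_Q(\mathbf x(t))(\|\tilde u(t)\|/\delta)^2$, with $\delta$ the parameter for which $Q$ satisfies \eqref{inequalityQ}. The remaining terms assemble into $LV_Q(\mathbf x(t),0;\delta)$ by Proposition~\ref{prop_lyap}, and then \eqref{inequalityQ} at $y=0$ (with $h(0)=0$, so $(h(\mathbf x)-h(0))^\top(h(\mathbf x)-h(0))=\|h(\mathbf x)\|^2=\|\mathbf y\|^2$) gives the differential inequality $\tfrac{d}{dt}\mathbb E[V_Q(\mathbf x(t))]\leq-\mathbb E\|\mathbf y(t)\|^2+\mathbb E[V_Q(\mathbf x(t))](\|\tilde u(t)\|/\delta)^2$.

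To turn this into an \emph{integral} bound on the output energy I would use an integrating factor: set $\Phi(t):=\exp\{-\int_{t_0}^t(\|\tilde u(s)\|/\delta)^2\,ds\}$, so that $\tfrac{d}{dt}\big(\Phi(t)\,\mathbb E[V_Q(\mathbf x(t))]\big)\leq-\Phi(t)\,\mathbb E\|\mathbf y(t)\|^2\leq 0$. Integrating on $[t_0,T]$, using $\Phi(t_0)=1$ and dropping the nonnegative boundary term $\Phi(T)\,\mathbb E[V_Q(\mathbf x(T))]\geq 0$ (valid since $Q$ is positive semidefinite), yields $\int_{t_0}^T\Phi(t)\,\mathbb E\|\mathbf y(t)\|^2\,dt\leq\mathbb E[V_Q(\mathbf x_{t_0})]$. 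Because $\Phi$ is nonincreasing, $\Phi(t)\geq\Phi(T)=\exp\{-\int_{t_0}^T(\|\tilde u(s)\|/\delta)^2\,ds\}$ on $[t_0,T]$, whence $\int_{t_0}^T\mathbb E\|\mathbf y(t)\|^2\,dt\leq\exp\{\int_{t_0}^T\|\tilde u(s)\|^2\,ds/\delta^2\}\,\mathbb E[V_Q(\mathbf x_{t_0})]$. Finally, since $q_k$ is a unit eigenvector of $Q$ with eigenvalue $\mu_k$, we have $V_Q(\mathbf x_{t_0})=\langle x(t_0),q_k\rangle^2\,q_k^\top Q q_k=\mu_k\langle x(t_0),q_k\rangle^2$, so $\mathbb E[V_Q(\mathbf x_{t_0})]=\mu_k\,\mathbb E[\langle x(t_0),q_k\rangle^2]$ and the claimed estimate follows.

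I expect the only genuine subtlety to be this last rearrangement --- extracting the \emph{integrated} output energy from the pointwise-in-$t$ differential inequality (rather than merely controlling $\mathbb E[V_Q(\mathbf x(t))]$ itself, as in Theorem~\ref{energy_est}), and sign-checking carefully so that the terminal term at $t=T$ and the integrating factor $\Phi$ can be dropped in the correct direction. Everything else is a direct transcription of the Itô/Young/Gronwall computations already carried out for Theorem~\ref{energy_est}, now with a random but $\mathcal F_{t_0}$-measurable initial condition, which leaves those steps unaffected.
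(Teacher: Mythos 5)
Your proposal is correct and follows essentially the same route as the paper: Itô's formula applied to $V_Q$, the Young-type estimate \eqref{estimate_on_G} on the bilinear term, and the simplified inequality \eqref{inequalityQ} at $y=0$ to bring in $-\mathbb E\|\mathbf y\|^2$, finished by a Gronwall-type argument and $V_Q(\mathbf x_{t_0})=\mu_k\langle x(t_0),q_k\rangle^2$. The only difference is bookkeeping in the last step: the paper integrates first and applies Lemma \ref{gron_int} with $\alpha(t)=\mathbb E[V_Q(\mathbf x_{t_0})]-\int_{t_0}^t\mathbb E\|\mathbf y(s)\|^2\,ds$, then uses $\mathbb E[V_Q(\mathbf x(t))]\geq 0$ to rearrange, whereas you use an integrating factor and the monotonicity of $\Phi$; both yield the identical bound.
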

\begin{proof}
We replace $P^{-1}$ by $Q$ in \eqref{eq_after_ito_exp} and \eqref{est_bil_term}. Moreover, we set $B=0$ and $M=0$ and combine both relations. Therefore, we find that  \begin{align}\nonumber
 \frac{d}{dt}\mathbb E[V_{Q}(\mathbf x(t))]&=\mathbb E\left[2\left\langle Q \mathbf x(t), f(\mathbf x(t))+G(\mathbf x(t))u(t)\right\rangle\right] +  \mathbb E\left[\trace\left(\Gamma(\mathbf x(t)) K \Gamma(\mathbf x(t))^\top Q\right) \right]  \\ \label{eq_after_ito_exp2} &\leq \mathbb E\left[2\left\langle Q \mathbf x(t), f(\mathbf x(t))\right\rangle\right] +  \mathbb E\left[\|Q^{\frac{1}{2}}(\Gamma(\mathbf x(t))) K^{\frac{1}{2}}\|_F^2\right]\\ \nonumber
 &\quad+\mathbb E\left[\delta^2\left\|Q^{\frac{1}{2}} G(\mathbf x(t))\right\|^2_F\right] + \mathbb E\left[V_{Q}(\mathbf x(t))\right] (\left\|\tilde u(t)\right\|/\delta)^2.
 \end{align}
We integrate \eqref{eq_after_ito_exp2} over $[t_0, t]$ resulting in
\begin{align*}
 \mathbb E[V_{Q}(\mathbf x(t))]-\mathbb E[V_{Q}(\mathbf x(t_0))]&\leq  \int_{t_0}^t \mathbb E\left[LV_{Q}(\mathbf x(s), 0; \delta)\right]+\mathbb E[V_{Q}(\mathbf x(s))] (\left\|\tilde u(s)\right\|/\delta)^2 ds\\
 &\leq \int_{t_0}^t -\mathbb E\left\|\mathbf y(s)\right\|^2+\mathbb E[V_{Q}(\mathbf x(s))] (\left\|\tilde u(s)\right\|/\delta)^2 ds
 \end{align*}
for all $t\in[t_0, T]$, where we inserted \eqref{inequalityQ} above and exploited that $E=0$. We apply Lemma \ref{gron_int} with $\alpha(t) = \mathbb E[V_{Q}(\mathbf x(t_0))]- \int_{t_0}^t \mathbb E\left\|\mathbf y(s)\right\|^2 ds\leq \mathbb E[V_{Q}(\mathbf x(t_0))]= \mathbb E[\langle x(t_0), q_{k}
\rangle^2 ]\mu_k$ and obtain  \begin{align*}
   0\leq \mathbb E[V_{Q}(\mathbf x(t))]\leq \alpha(t)+\int_{t_0}^t \alpha(s)(\left\|\tilde u(s)\right\|/\delta)^2 \exp\left\{\int_s^t (\left\|\tilde u(v)\right\|/\delta)^2 dv\right\} ds
    \end{align*}
 for all $t\in[t_0, T]$. Consequently, we have
    \begin{align*}
   \int_{t_0}^t \mathbb E\left\|\mathbf y(s)\right\|^2 ds  &\leq \mathbb E[\langle x(t_0), q_{k}\rangle^2] \mu_k \left(1+  \int_{t_0}^t (\left\|\tilde u(s)\right\|/\delta)^2 \exp\left\{\int_s^t (\left\|\tilde u(v)\right\|/\delta)^2 dv\right\} ds\right)\\
   &=\mathbb E[\langle x(t_0), q_{k}\rangle^2] \mu_k   \exp\left\{\int_{t_0}^t (\left\|\tilde u(s)\right\|/\delta)^2 ds\right\}
   \end{align*}
 for all $t\in[t_0, T]$. Setting $t=T$ yields the claim.
 \end{proof}
If an eigenvalue $\mu_k$ corresponding to an eigenvector $q_k$ of a simplified observability Gramian $Q$ is small, then we know that the output will be small if we initialize the system in the direction of $q_k$ while switching off the additive controls. This is a motivation to neglect $q_k$ in the dynamics. However, it is more satisfactory to know what happens to the fully controlled system if we remove $q_k$ from $x(t_0)$. Addressing this second question, we determine the equation for the difference between \eqref{original_system} and \eqref{stochstate_mod_in}. We now set \begin{align}\label{initial_state_diff}
  \mathbf x_{t_0}=\sum_{i=1 \atop i\neq k}^n \langle x(t_0), q_{i}
\rangle \,q_i,                                                                                                                                                                                                                                  \end{align}
i.e, we investigate how much the output $y$ is affected if we neglect the direction $q_k$ in $x(t_0)$. We find that
\begin{equation}\label{stochstate_mod_in2}
\begin{aligned}
 x(t)-\mathbf x(t)&=x(t_0)-\mathbf x_{t_0}+\int_{t_0}^t[f(x(s))-f(\mathbf x(s))+ [G(x(s))-G(\mathbf x(s))]u(s)]ds\\
 &\quad\quad\quad\quad\quad\quad+\int_{t_0}^t[\Gamma(x(s))-\Gamma(\mathbf x(s))]dw(s),\\
          y(t)-\mathbf y(t)&=h(x(t))- h(\mathbf x(t)),\quad t\in [t_0, T].
\end{aligned}
\end{equation}
We use \eqref{stochstate_mod_in2} in the following theorem.
\begin{thm}\label{energy_est3}
Let $Q$ be an observability Gramian according to Definition \ref{def_Q}, $y$ be the output of \eqref{original_system} and  $\mathbf y$ be that of system
 \eqref{stochstate_mod_in} with initial state $\mathbf x_{t_0}$ given in \eqref{initial_state_diff}. Let $(q_k, \mu_{k})$ be a basis of eigenvectors of $Q$, like in \eqref{eigen_rep_Q}, equipped with the eigenvalues $\mu_{k}=\mu_k(\delta)>0$, $\delta>0$. Assuming  that $\tilde u$, defined in \eqref{multipl_control}, is deterministic, we have
 \begin{align*}
   \int_{t_0}^T \mathbb E\left\|y(s)-\mathbf y(s)\right\|^2 ds  \leq \mu_k   \mathbb E[\langle x(t_0), q_{k}\rangle^2] \exp\left\{\int_{t_0}^T \left\|\tilde u(s)\right\|^2 ds /\delta^2\right\}.
\end{align*}
\end{thm}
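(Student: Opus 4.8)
The plan is to mirror the argument of Theorem \ref{energy_est2}, but to apply it to the \emph{difference process} $e(t):=x(t)-\mathbf x(t)$ governed by \eqref{stochstate_mod_in2} and to invoke the \emph{full} observability Gramian inequality \eqref{inequalityQ} (with the second argument equal to $\mathbf x(t)$ rather than $0$, which is exactly why a simplified $Q$ does not suffice). First I would note that $e$ is a genuine Itô process: by \eqref{stochstate_mod_in2} it solves
\begin{align*}
de(t)=\bigl[f(x(t))-f(\mathbf x(t))+(G(x(t))-G(\mathbf x(t)))u(t)\bigr]dt+(\Gamma(x(t))-\Gamma(\mathbf x(t)))dw(t),
\end{align*}
with $e(t_0)=x(t_0)-\mathbf x_{t_0}=\langle x(t_0),q_k\rangle\,q_k$ by \eqref{initial_state_diff}. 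Applying Lemma \ref{lemstochdiff} with $V(x)=V_Q(x)=x^\top Q x$, so that $DV_Q(e)=2Qe$ and $D^2V_Q(e)=2Q$, taking expectations, and using that the Itô integral has mean zero, yields
\begin{align*}
\frac{d}{dt}\mathbb E[V_Q(e(t))]=\mathbb E\bigl[2\langle Qe(t),f(x(t))-f(\mathbf x(t))+(G(x(t))-G(\mathbf x(t)))u(t)\rangle\bigr]+\mathbb E\bigl[\trace((\Gamma(x(t))-\Gamma(\mathbf x(t)))K(\Gamma(x(t))-\Gamma(\mathbf x(t)))^\top Q)\bigr].
\end{align*}

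Next I would treat the multiplicative control term exactly as in \eqref{estimate_on_G}: since $g_i\equiv 0$ contributes nothing, $(G(x)-G(\mathbf x))u=(G(x)-G(\mathbf x))\tilde u$, and Young's inequality gives
\begin{align*}
2\langle Qe(t),(G(x(t))-G(\mathbf x(t)))u(t)\rangle\leq\delta^2\|Q^{1/2}(G(x(t))-G(\mathbf x(t)))\|_F^2+V_Q(e(t))\,(\|\tilde u(t)\|/\delta)^2.
\end{align*}
Substituting this bound, the remaining expectation is precisely $\mathbb E[LV_Q(x(t),\mathbf x(t);\delta)]$ by Proposition \ref{prop_lyap}, so the full observability inequality \eqref{inequalityQ} (with $x=x(t)$, $y=\mathbf x(t)$) gives $LV_Q(x(t),\mathbf x(t);\delta)\leq-\|h(x(t))-h(\mathbf x(t))\|^2=-\|y(t)-\mathbf y(t)\|^2$, the last equality because the $Eu(t)$ terms cancel in the output difference of \eqref{stochstate_mod_in2}. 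Hence
\begin{align*}
\frac{d}{dt}\mathbb E[V_Q(e(t))]\leq-\mathbb E\|y(t)-\mathbf y(t)\|^2+\mathbb E[V_Q(e(t))]\,(\|\tilde u(t)\|/\delta)^2.
\end{align*}

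Finally I would integrate over $[t_0,t]$ and use $\mathbb E[V_Q(e(t_0))]=\mathbb E[\langle x(t_0),q_k\rangle^2]\,q_k^\top Q q_k=\mu_k\,\mathbb E[\langle x(t_0),q_k\rangle^2]$, since $Qq_k=\mu_k q_k$ and $\|q_k\|=1$. Setting $\alpha(t):=\mathbb E[V_Q(e(t_0))]-\int_{t_0}^t\mathbb E\|y(s)-\mathbf y(s)\|^2ds\leq\mathbb E[V_Q(e(t_0))]$, the integral Gronwall inequality (Lemma \ref{gron_int}), applied verbatim as in the proof of Theorem \ref{energy_est2}, bounds $\mathbb E[V_Q(e(t))]$ from above, and since $\mathbb E[V_Q(e(t))]\geq 0$ this rearranges to
\begin{align*}
\int_{t_0}^t\mathbb E\|y(s)-\mathbf y(s)\|^2ds\leq\mathbb E[V_Q(e(t_0))]\Bigl(1+\int_{t_0}^t(\|\tilde u(s)\|/\delta)^2\exp\Bigl\{\int_s^t(\|\tilde u(v)\|/\delta)^2dv\Bigr\}ds\Bigr)=\mathbb E[V_Q(e(t_0))]\exp\Bigl\{\int_{t_0}^t(\|\tilde u(s)\|/\delta)^2ds\Bigr\};
\end{align*}
taking $t=T$ gives the claim. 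I do not anticipate a genuine obstacle here: the only points requiring care are (i) confirming that $e$ is an Itô process so Lemma \ref{lemstochdiff} applies — immediate from the shared driving Wiener process and the additivity of the integrals noted before \eqref{stochstate_mod_in2} — and (ii) using the \emph{non-simplified} version of \eqref{inequalityQ} with second argument $\mathbf x(t)$, which is the whole reason the full observability Gramian is needed for this statement rather than the simplified one used in Theorem \ref{energy_est2}.
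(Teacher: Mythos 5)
Your proposal is correct and follows essentially the same route as the paper's proof: Itô's formula (Lemma \ref{lemstochdiff}) applied to $V_Q$ along the difference process from \eqref{stochstate_mod_in2}, the Young-type estimate on the $G$-term as in \eqref{estimate_on_G}, insertion of the full inequality \eqref{inequalityQ} with second argument $\mathbf x(t)$, the identification $\mathbb E[V_Q(x(t_0)-\mathbf x_{t_0})]=\mu_k\,\mathbb E[\langle x(t_0),q_k\rangle^2]$, and Gronwall's integral lemma with the same choice of $\alpha$. No gaps; your remarks (i) and (ii) correspond exactly to the points the paper itself emphasizes.
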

\begin{proof}
Considering \eqref{stochstate_mod_in2}, we apply Lemma \ref{lemstochdiff} for  $V(x)=V_Q(x)=x^\top Q x$ setting $a(t)=f(x(t))-f(\mathbf x(t))+[G(x(t))-G(\mathbf x(t))]u(t)$ and $B(t)= \Gamma(x(t))-\Gamma(\mathbf x(t))$. Taking the expectation, we  have \begin{align}\nonumber
 \frac{d}{dt}\mathbb E[V_{Q}(x(t)-\mathbf x(t))]=&\mathbb E\left[2\left\langle Q (x(t)-\mathbf x(t)), f(x(t))-f(\mathbf x(t))+[G(x(t))-G(\mathbf x(t))]u(t)\right\rangle\right]\\ \label{first_est_withQ}
 &+  \mathbb E\left[\trace\left((\Gamma(x(t))-\Gamma(\mathbf x(t))) K (\Gamma(x(t))-\Gamma(\mathbf x(t)))^\top Q\right) \right].                                                                                                                   \end{align}
 We make use of the same steps as in estimate \eqref{estimate_on_G} yielding \begin{align*}
   &2\left\langle Q (x(t)-\mathbf x(t)), [G(x(t))-G(\mathbf x(t))]u(t)\right\rangle\\
   &\leq
  \delta^2\left\|Q^{\frac{1}{2}}\big(G(x(t))-G(\mathbf x(t))\big)\right\|^2_F + V_{Q}(x(t)-\mathbf x(t)) (\left\|\tilde u(t)\right\|/\delta)^2.
                 \end{align*}
Combining this inequality with \eqref{first_est_withQ} and integrating the result over $[t_0, t]$ gives us \begin{align*}
 &\mathbb E[V_{Q}(x(t)-\mathbf x(t))]-\mathbb E[V_{Q}(x(t_0)-\mathbf x(t_0))]=\mathbb E[V_{Q}(x(t)-\mathbf x(t))]- \mathbb E[\langle x(t_0), q_{k} \rangle^2 ]\mu_k\\
 &\leq  \int_{t_0}^t \mathbb E\left[LV_{Q}(x(t), \mathbf x(s); \delta)\right]+\mathbb E[V_{Q}(x(s)-\mathbf x(s))] (\left\|\tilde u(s)\right\|/\delta)^2 ds\\
 &\leq \int_{t_0}^t -\mathbb E\left\|y(s)-\mathbf y(s)\right\|^2+\mathbb E[V_{Q}(x(t)-\mathbf x(s))] (\left\|\tilde u(s)\right\|/\delta)^2 ds
 \end{align*}
for all $t\in[t_0, T]$. The last inequality is achieved by inserting \eqref{inequalityQ} above and by the output equation in \eqref{stochstate_mod_in2}. We apply Lemma \ref{gron_int} with $\alpha(t) =  \mathbb E[\langle x(t_0), q_{k} \rangle^2 ]\mu_k- \int_{t_0}^t \mathbb E\left\|y(s)-\mathbf y(s)\right\|^2 ds$. Following the exactly same arguments as in the proof of Theorem \ref{energy_est2}, we find that  \begin{align*}
   \int_{t_0}^t \mathbb E\left\|y(s)-\mathbf y(s)\right\|^2 ds  \leq \mathbb E[\langle x(t_0), q_{k}\rangle^2] \mu_k   \exp\left\{\int_{t_0}^t (\left\|\tilde u(s)\right\|/\delta)^2 ds\right\}
   \end{align*}
  for all $t\in [t_0, T]$ yielding the claim.
  \end{proof}
Theorem \ref{energy_est3} tells us that the output $\mathbf y$ that is obtained after neglecting the eigenvector $q_k$ of the full observability Gramian is still close to the original output $y$ if the associated eigenvalue $\mu_k$ is small. This is a strong indicator that we do not need such a $q_k$ in our dynamics. Given that \eqref{original_system} has linear coefficients $f, G, \Gamma$ and $h$, system \eqref{stochstate_mod_in2} with $\mathbf x_{t_0}$ like in \eqref{initial_state_diff} becomes
\begin{align*}
 x(t)-\mathbf x(t)&=\langle x(t_0), q_{k} \rangle \,q_k+\int_{t_0}^t[f\big(x(s)-\mathbf x(s)\big)+ G\big(x(s)-\mathbf x(s)\big)u(s)]ds\\
 &\quad\quad\quad\quad\quad\quad\quad\;+\int_{t_0}^t \Gamma\big(x(s)-\mathbf x(s)\big) dw(s),\\
          y(t)-\mathbf y(t)&=h\big(x(t)- \mathbf x(t)\big),\quad t\in [t_0, T].
\end{align*}
Therefore, $x-\mathbf x$ is the solution of the state equation in \eqref{stochstate_mod_in} with initial value $\langle x(t_0), q_{k} \rangle$ given $B=0$ and $M\equiv 0$. Moreover, given $E=0$, $y(t)-\mathbf y(t)$ is the corresponding output that is addressed in Theorem \ref{energy_est2}. For that reason, Theorems \ref{energy_est2} and \ref{energy_est3} consider the same aspect. Hence, both questions asked in this subsection are the same in the linear case. This might not be surprising given the fact that the simplified and the full Gramian are the same for linear system, see Remark \ref{remark_lin_case}.

\section{Reduced order model based on balancing and truncation}\label{sec_BT}

We found unnecessary system information in Subsections \ref{subsecP} and \ref{subsecQ}. Therefore, we know that eigenspaces of $P$ and $Q$ can be truncated if the associated eigenvalues are small. Since the eigenspaces of $P$ and $Q$ do not coincide, we apply a state space transform that ensures that $P$ and $Q$ are both diagonal and the same which allows a simple and simultaneous truncation of unimportant information. For that reason, we introduce an invertible matrix $\mathcal S\in\mathbb{R}^{n\times n}$ that defines a new state by $\tilde x=\mathcal Sx$. Inserting this into \eqref{original_system}  leads to an equivalent stochastic system  \begin{equation}\label{original_system_S}
\begin{aligned}
             d\tilde x(t)&=[\tilde f(\tilde x(t))+\tilde Bu(t)+ \tilde G(\tilde x(t))u(t)]dt+[\tilde \Gamma(\tilde x(t))+\tilde M(u(t))]dw(t),\\ 
            y(t) &= \tilde h(\tilde x(t))+E u(t),\quad t\geq 0,
\end{aligned}
\end{equation}
with coefficients
\begin{align}\label{coef_trans}
  \Big(\tilde f, \tilde B, \tilde G, \tilde \Gamma, \tilde M, \tilde h\Big):=\Big(\mathcal S f(\mathcal S^{-1}\cdot), \mathcal S B, \mathcal S G(\mathcal S^{-1}\cdot), \mathcal S \Gamma(\mathcal S^{-1}\cdot), \mathcal S M(\cdot), h(\mathcal S^{-1}\cdot)\Big).
\end{align}
System \eqref{original_system_S} has the same input-output behaviour like \eqref{original_system}. On the other hand, the Gramians change under this transformation. We can use this fact to simultaneously diagonalize $P$ and $Q$. This diagonalization is called balancing.
\begin{prop}\label{prop_bal}
 Let $\mathcal S$ be an invertible matrix and $P$, $Q$ Gramians of \eqref{original_system} according to Definitions \ref{def_P} and \ref{def_Q}, respectively. Then, $\tilde P= \mathcal SP\mathcal S^\top$ and $\tilde Q=\mathcal S^{-\top}Q\mathcal S^{-1}$ are Gramians of \eqref{original_system_S}. Given that $Q$ is invertible, we obtain   $\tilde P=\tilde Q = \Sigma= \diag(\sigma_1,\ldots,\sigma_n)$ using  \begin{align}\label{bal_transform_new}
       \mathcal S=\Sigma^{\frac{1}{2}} V^\top L_P^{-1},                                                                                                                                                                                                                                                                                                                                                                                                                                                                                                                                                                                                                                                                                                                                                                                          \end{align}
where $P=L_PL_P^\top$ and $L_P^\top QL_P=V\Sigma^2 V^\top$ is an eigenvalue decomposition with an orthogonal $V$. $\mathcal S$ in \eqref{bal_transform_new} is called balancing transformation.
\end{prop}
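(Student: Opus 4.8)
The plan is to show that every ingredient of Definitions \ref{def_P} and \ref{def_Q} is invariant under the change of variables $\tilde x=\mathcal S x$, $\tilde y=\mathcal S y$, and then to perform the standard balancing computation. First I would record, directly from \eqref{coef_trans}, the identities $\tilde f(\tilde x)-\tilde f(\tilde y)=\mathcal S(f(x)-f(y))$, $\tilde G(\tilde x)-\tilde G(\tilde y)=\mathcal S(G(x)-G(y))$, $\tilde\Gamma(\tilde x)-\tilde\Gamma(\tilde y)=\mathcal S(\Gamma(x)-\Gamma(y))$, $\tilde h(\tilde x)-\tilde h(\tilde y)=h(x)-h(y)$, as well as $\tilde B=\mathcal S B$, $\tilde M_j=\mathcal S M_j$ and $\tilde x-\tilde y=\mathcal S(x-y)$; since \eqref{original_system_S} again has the structural form required by Definitions \ref{def_P}--\ref{def_Q}, these relations reduce everything to the original system.

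Next I would verify invariance of the observability inequality. Using Proposition \ref{prop_lyap} for the transformed coefficients with matrix $\tilde Q=\mathcal S^{-\top}Q\mathcal S^{-1}$, the first term becomes $2\langle\mathcal S(x-y),\mathcal S^{-\top}Q\mathcal S^{-1}\mathcal S(f(x)-f(y))\rangle=2\langle x-y,Q(f(x)-f(y))\rangle$, while the two Frobenius terms are handled by writing $\|\tilde Q^{1/2}A\|_F^2=\trace(A^\top\tilde Q A)=\trace(A^\top\mathcal S^{-\top}Q\mathcal S^{-1}A)$ with $A=\mathcal S(G(x)-G(y))$ and $A=\mathcal S(\Gamma(x)-\Gamma(y))K^{1/2}$. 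This gives $LV_{\tilde Q}(\tilde x,\tilde y;\delta)=LV_Q(x,y;\delta)$ (the operator on the left being built from $\tilde f,\tilde G,\tilde\Gamma$), and since $\tilde h(\tilde x)-\tilde h(\tilde y)=h(x)-h(y)$ and $\mathcal S$ is a bijection of $\mathbb R^n$, inequality \eqref{inequalityQ} for $Q$ transfers verbatim to $\tilde Q$. For the reachability Gramian I would take the same free parameter $\mathcal U$ for the transformed system; then $\sum_{i,j}\tilde M_i^\top\tilde P^{-1}\tilde M_j k_{ij}=\sum_{i,j}M_i^\top P^{-1}M_j k_{ij}$ yields $U_{\tilde P^{-1}}=U_{P^{-1}}>0$, and analogously $S_{\tilde P^{-1}}(\tilde x,\tilde y)=S_{P^{-1}}(x,y)$ and $LV_{\tilde P^{-1}}(\tilde x,\tilde y;\delta)=LV_{P^{-1}}(x,y;\delta)$, so \eqref{inequalityP} transfers to $\tilde P=\mathcal S P\mathcal S^\top$.

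Finally I would carry out the balancing computation. Plugging $\mathcal S=\Sigma^{1/2}V^\top L_P^{-1}$ into $\tilde P=\mathcal S P\mathcal S^\top$ and using $P=L_PL_P^\top$ and $V^\top V=I$ gives $\tilde P=\Sigma^{1/2}V^\top V\Sigma^{1/2}=\Sigma$; and since $\mathcal S^{-1}=L_PV\Sigma^{-1/2}$, one computes $\tilde Q=\mathcal S^{-\top}Q\mathcal S^{-1}=\Sigma^{-1/2}V^\top(L_P^\top QL_P)V\Sigma^{-1/2}=\Sigma^{-1/2}V^\top(V\Sigma^2V^\top)V\Sigma^{-1/2}=\Sigma$. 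Positive definiteness of $P$ (built into Definition \ref{def_P}) makes $L_P$ invertible, while invertibility of $Q$ guarantees that $\Sigma^2$ is positive definite, so $\Sigma^{-1/2}$ and hence $\mathcal S$ are well-defined; the $\sigma_i$ are the square roots of the eigenvalues of $PQ$.

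The one point requiring care is the square-root bookkeeping: $\tilde Q^{1/2}$ is generally different from $\mathcal S^{-\top}Q^{1/2}$ (the latter need not be symmetric), so the middle two terms of $L$ must be expressed through $\trace(A^\top\tilde QA)$ rather than through $\tilde Q^{1/2}$ directly, and likewise for $\tilde P^{1/2}$; together with the observation that the free parameter $\mathcal U$ must be kept fixed when passing to the transformed system so that the right-hand side of \eqref{inequalityP} is preserved, this is the only place where the otherwise routine change-of-variables calculation can go wrong.
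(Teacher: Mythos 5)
Your proposal is correct and follows essentially the same route as the paper: you verify that the Lyapunov operator and the objects $U$, $S$ (with the same fixed $\mathcal U$) are invariant under $\tilde x=\mathcal S x$ with $X$ replaced by $\mathcal S^{-\top}X\mathcal S^{-1}$, so \eqref{inequalityP} and \eqref{inequalityQ} transfer to $\tilde P=\mathcal SP\mathcal S^\top$ and $\tilde Q=\mathcal S^{-\top}Q\mathcal S^{-1}$, and then you carry out the identical balancing computation with $\mathcal S=\Sigma^{1/2}V^\top L_P^{-1}$. Your closing remarks on expressing the Frobenius terms via traces and keeping $\mathcal U$ fixed match how the paper handles these points.
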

\begin{proof}
Using Proposition \ref{prop_lyap}, we have \begin{align*}
LV_{X}(x, y; \delta)
&=2\langle X(x-y), f(x)-f(y)\rangle + \delta^2\trace\left((G(x)-G(y)) (G(x)-G(y))^\top X\right)\\ &\quad+ \trace\left((\Gamma(x)-\Gamma(y)) K (\Gamma(x)-\Gamma(y))^\top X\right)
\end{align*}
for $x, y\in\mathbb R^n$. We observe that \begin{align*}
 \langle X(x-y), f(x)-f(y)\rangle &=\langle \mathcal S^{-\top}X\mathcal S^{-1} \mathcal S(x-y), \mathcal S(f(\mathcal S^{-1} \mathcal Sx)-f(\mathcal S^{-1} \mathcal Sy))\rangle\\
 &= \langle \mathcal S^{-\top}X\mathcal S^{-1}(\tilde x-\tilde y), \tilde f(\tilde x)-\tilde f(\tilde y)\rangle
                                              \end{align*}
with $\tilde x=\mathcal Sx$ and $\tilde y= \mathcal Sy$. We obtain that \begin{align*}
&\trace\left((\Gamma(x)-\Gamma(y)) K (\Gamma(x)-\Gamma(y))^\top X\right)\\
&=\trace\left(\mathcal S(\Gamma(\mathcal S^{-1} \mathcal Sx)-\Gamma(\mathcal S^{-1} \mathcal Sy)) K (\Gamma(\mathcal S^{-1} \mathcal Sx)-\Gamma(\mathcal S^{-1} \mathcal Sy))^\top \mathcal S^\top \mathcal S^{-\top}X\mathcal S^{-1}\right)\\
&=\trace\left((\tilde\Gamma(\tilde x)-\tilde \Gamma(\tilde y)) K (\tilde \Gamma(\tilde x)-\tilde \Gamma(\tilde y))^\top  \mathcal S^{-\top}X\mathcal S^{-1}\right).
                                    \end{align*}
The same relation for the trace expression involving $G$ follows completely analogously. Now, let $\tilde L$ denote the operator that is obtained by replacing $( f, G, \Gamma)$ by $(\tilde f, \tilde G, \tilde \Gamma)$ in \eqref{layp_op}. Hence, we have \begin{align}\label{trans1}
 LV_{X}(x, y; \delta)= \tilde LV_{\mathcal S^{-\top}X\mathcal S^{-1}}(\tilde x, \tilde y; \delta).                                                                                                                                                                                                                                                                                                                                                                                                                                                                                                                                                                                                       \end{align}
Recalling the definitions of $U$ and $S$ in \eqref{def_U} and \eqref{def_S}, we find\begin{align}\label{trans2}
 U_{X} = \mathcal U - \sum_{i, j=1}^q M_i^\top \mathcal S^\top \mathcal S^{-\top}X\mathcal S^{-1} \mathcal S M_j k_{ij}=: \tilde U_{\mathcal S^{-\top}X\mathcal S^{-1}}
                                        \end{align}
as well as
\begin{align}\label{trans3}
    S_{X}(x,y)&=B^\top \mathcal S^{\top}\mathcal S^{-\top}X\mathcal S^{-1}\mathcal S(x-y)\\ \nonumber
    &\quad+\sum_{i, j=1}^q M_i^\top \mathcal S^{\top} \mathcal S^{-\top}X\mathcal S^{-1} \mathcal S (\gamma_j( \mathcal S^{-1} \mathcal S x)-\gamma_j(\mathcal S^{-1} \mathcal S y))k_{ij}
    =: \tilde S_{\mathcal S^{-\top}X\mathcal S^{-1}}(\tilde x, \tilde y).
\end{align}
Given that \eqref{inequalityP} holds and setting $X=P^{-1}$ in \eqref{trans1}, \eqref{trans2} and \eqref{trans3}, we have \begin{align*}
 \tilde LV_{\tilde P^{-1}}(\tilde x, \tilde y; \delta)\leq -\tilde S_{\tilde P^{-1}}(\tilde x, \tilde y)^\top \tilde U_{\tilde P^{-1}}^{-1} \tilde S_{\tilde P^{-1}}(\tilde x, \tilde y)
                      \end{align*}
for all $\tilde x, \tilde y\in\mathbb R^n$ and some $\delta>0$. This means that $\tilde P=\mathcal SP\mathcal S^\top$
 is a reachability Gramian of \eqref{original_system_S}. Now, fixing $X=Q$ in \eqref{trans1}, we obtain from \eqref{inequalityQ} that \begin{align*}
 \tilde LV_{\tilde Q}(\tilde x, \tilde y; \delta)\leq -(\tilde h(\tilde x)- \tilde h(\tilde y))^\top (\tilde h(\tilde x)- \tilde h(\tilde y))
                      \end{align*}
for all $\tilde x, \tilde y\in\mathbb R^n$ and some $\delta>0$, i.e, $\tilde Q=\mathcal S^{-\top}Q\mathcal S^{-1}$ is an observability Gramian of \eqref{original_system_S}.
We insert \eqref{bal_transform_new} into $\tilde P= \mathcal SP\mathcal S^\top= \Sigma^{\frac{1}{2}} V^\top L_P^{-1} P L_P^{-\top} V \Sigma^{\frac{1}{2}}=\Sigma$ and use the same $\mathcal S$ for $\tilde Q=\mathcal S^{-\top}Q\mathcal S^{-1}= \Sigma^{-\frac{1}{2}} V^\top L_P^{\top} Q L_P V \Sigma^{-\frac{1}{2}}=\Sigma$ which concludes the proof.
\end{proof}
The diagonal entries of the balanced system  Gramians $\Sigma$ are $\sigma_i= \sqrt{\lambda_i(PQ)}$. We call $\sigma_1, \dots, \sigma_n$ Hankel singular values (HSVs) in the following.
 We partition $\Sigma=\diag(\Sigma_1,\Sigma_{2})$, where
  $\Sigma_1= \diag(\sigma_1,\ldots,\sigma_r)$ contains large and $\Sigma_{2}=\diag(\sigma_{r+1},\ldots,\sigma_n)$, $r\ll n$, small HSVs. The state \begin{align*}
  \tilde x=\begin{bmatrix}
 x_{1}\\  x_{2}
\end{bmatrix}
\end{align*}
in \eqref{original_system_S} is divided into the important variables $x_1$ (taking values in $\mathbb R^r$) associated to $\Sigma_1$ and into the variables $x_2$ of low relevance corresponding to $\Sigma_2$. Before stating the reduced order system, we introduce truncated versions $f_r, G_r, \Gamma_r$ of the vector/matrix-valued functions $\tilde f, \tilde G, \tilde \Gamma$ on $\mathbb R^n$ by \begin{align}\label{def_trucation_function}
  \tilde f(\smat x_r\\ 0\srix) = \begin{bmatrix}{f}_r(x_r)\\ f_\star(x_r)\end{bmatrix},\quad  \tilde G(\smat x_r\\ 0\srix) = \begin{bmatrix}{G}_r(x_r)\\ G_\star(x_r)\end{bmatrix},\quad  \tilde {\Gamma}(\smat x_r\\ 0\srix) = \begin{bmatrix}{\Gamma}_r(x_r)\\ \Gamma_\star(x_r)\end{bmatrix}                                                                                                                                                                                                                                                                                                                                                                                                                                                                                                                                                                                                                    \end{align}
with $x_r\in \mathbb R^r$, and $0\in \mathbb R^{n-r}$. Here, $f_r, G_r, \Gamma_r$ are vector/matrix-valued maps on $\mathbb R^r$ having $r$ rows.
Moreover, we define the remaining reduced order coefficients as follows
  \begin{equation}\label{part_bal_control_part}
  \begin{aligned}
 h_r(x_r)=\tilde h(\smat x_r\\ 0\srix),\quad\tilde B &= \begin{bmatrix}{B}_r\\ B_\star\end{bmatrix}\quad \text{and}\quad\tilde M(z) = \begin{bmatrix}{M}_r(z)\\ M_\star(z)\end{bmatrix},\quad z\in\mathbb R^m.
  \end{aligned}
  \end{equation}
Again, $B_r$ and $M_r$ have $r$ rows.

The reduced system is obtained by setting $x_2\equiv 0$ in \eqref{original_system_S} and by removing the equations of $x_{2}$. This results in a reduced system
\begin{subequations}\label{red_system}
 \begin{align}\label{red_system_state}
             dx_r(t)&=[f_r(x_r(t))+B_ru(t)+ G_r(x_r(t))u(t)]dt+[\Gamma_r(x_r(t))+M_r(u(t))]dw(t),\\ \label{red_output}
            y_r(t) &= h_r(x_r(t))+E u(t),\quad t\geq 0.
\end{align}
\end{subequations}

\section{Properties of the reduced system \eqref{red_system}}\label{sec_propo_red_sys}

For simplicity of the notation, let us from now on assume that the original system is already balanced, i.e., the balancing transformation using \eqref{bal_transform_new} has already been applied, so that \eqref{original_system} and \eqref{original_system_S} coincide. Consequently, we have $P=Q=\Sigma=\diag(\sigma_1, \dots, \sigma_n)$, so that \eqref{inequalityP} and \eqref{inequalityQ} become
\begin{subequations}\label{balanced_equations}
\begin{align}\label{diagP}
LV_{\Sigma^{-1}}(x, y; \delta)&\leq -S_{\Sigma^{-1}}(x,y)^\top U_{\Sigma^{-1}}^{-1} S_{\Sigma^{-1}}(x, y)\\ \label{diagQ}
 LV_{\Sigma}(x, y; \delta)&\leq -(h(x)- h(y))^\top (h(x)- h(y)),                                                                                                                                                     \end{align}
 \end{subequations}
for all $x, y\in\mathbb R^n$ and some $\delta>0$, while $U_{\Sigma^{-1}}$ is positive definite.

We now derive the Gramians for \eqref{red_system} based on \eqref{balanced_equations}.
\begin{prop}\label{prop_red_bal}
The diagonal matrix $\Sigma_1=\diag(\sigma_1, \dots, \sigma_r)$ is both  a reachability and an observability Gramian of \eqref{red_system} according to Definitions \ref{def_P} and \ref{def_Q}.
\end{prop}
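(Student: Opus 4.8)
The plan is to show that the two balanced inequalities \eqref{diagP} and \eqref{diagQ}, which hold for all $x,y\in\mathbb R^n$, restrict to the corresponding inequalities for the reduced coefficients when we plug in vectors whose last $n-r$ components vanish. Concretely, for $x_r,y_r\in\mathbb R^r$ set $x=\left(\begin{smallmatrix}x_r\\0\end{smallmatrix}\right)$ and $y=\left(\begin{smallmatrix}y_r\\0\end{smallmatrix}\right)$ in $\mathbb R^n$. The key bookkeeping observation is that with this choice, $x-y=\left(\begin{smallmatrix}x_r-y_r\\0\end{smallmatrix}\right)$, and since $\Sigma=\diag(\Sigma_1,\Sigma_2)$ is block diagonal, the quadratic form $V_\Sigma(x-y)=(x-y)^\top\Sigma(x-y)=(x_r-y_r)^\top\Sigma_1(x_r-y_r)=V_{\Sigma_1}(x_r-y_r)$, and likewise $V_{\Sigma^{-1}}(x-y)=V_{\Sigma_1^{-1}}(x_r-y_r)$. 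So the quadratic Lyapunov forms collapse exactly onto the reduced ones.

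Next I would verify that each term of $LV$ in Proposition~\ref{prop_lyap} also collapses correctly. Using the truncation notation \eqref{def_trucation_function}, $\Sigma(x-y)$ has the block form $\left(\begin{smallmatrix}\Sigma_1(x_r-y_r)\\0\end{smallmatrix}\right)$, so
$\langle \Sigma(x-y),\,\tilde f(x)-\tilde f(y)\rangle = \langle \Sigma_1(x_r-y_r),\,f_r(x_r)-f_r(y_r)\rangle$
because the zero block kills the contribution of $f_\star$. For the two trace terms, one writes e.g.
$\trace\big((\tilde G(x)-\tilde G(y))(\tilde G(x)-\tilde G(y))^\top\Sigma\big)$; partitioning $\tilde G(x)-\tilde G(y)=\left(\begin{smallmatrix}G_r(x_r)-G_r(y_r)\\ G_\star(x_r)-G_\star(y_r)\end{smallmatrix}\right)$ and using block-diagonality of $\Sigma$, only the upper-left block survives, giving $\trace\big((G_r(x_r)-G_r(y_r))(G_r(x_r)-G_r(y_r))^\top\Sigma_1\big)$ (the $\delta^2$ factor is untouched), and analogously for the $\Gamma$-term with the $K$ inside. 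Summing, $LV_{\Sigma}(x,y;\delta)$ restricted to these arguments equals the reduced operator $L_r V_{\Sigma_1}(x_r,y_r;\delta)$ built from $(f_r,G_r,\Gamma_r)$, and similarly for $V_{\Sigma^{-1}}$. Feeding this into \eqref{diagQ} and noting $h(x)-h(y)=h_r(x_r)-h_r(y_r)$ by \eqref{part_bal_control_part} immediately yields $L_rV_{\Sigma_1}(x_r,y_r;\delta)\le -(h_r(x_r)-h_r(y_r))^\top(h_r(x_r)-h_r(y_r))$ for all $x_r,y_r\in\mathbb R^r$, which is exactly Definition~\ref{def_Q} for the reduced system with $\Sigma_1$; since $\Sigma_1\succ0$, it is a valid observability Gramian.

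For the reachability part I must also track $U$ and $S$. From \eqref{def_U}, $U_{\Sigma_1^{-1}}=\mathcal U-\sum_{i,j}M_{r,i}^\top\Sigma_1^{-1}M_{r,j}k_{ij}$ where $M_{r,i}$ are the top $r$ rows of $M_i$; because $\Sigma^{-1}$ is block diagonal, $M_i^\top\Sigma^{-1}M_j=M_{r,i}^\top\Sigma_1^{-1}M_{r,j}+M_{\star,i}^\top\Sigma_2^{-1}M_{\star,j}$, so $U_{\Sigma^{-1}}=U_{\Sigma_1^{-1}}-\sum_{i,j}M_{\star,i}^\top\Sigma_2^{-1}M_{\star,j}k_{ij}$; i.e. $U_{\Sigma_1^{-1}}\succeq U_{\Sigma^{-1}}\succ0$ (the subtracted matrix being $M_\star(z)K M_\star(z)^\top$-type, hence PSD under the standing sign conventions on $K$), so $U_{\Sigma_1^{-1}}$ is positive definite and its inverse is bounded above by $U_{\Sigma^{-1}}^{-1}$ in the Loewner order. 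Similarly, from \eqref{def_S} with $x,y$ as above one checks $S_{\Sigma^{-1}}(x,y)=B_r^\top\Sigma_1^{-1}(x_r-y_r)+\sum_{i,j}M_{r,i}^\top\Sigma_1^{-1}(\gamma_{r,j}(x_r)-\gamma_{r,j}(y_r))k_{ij}=S_{\Sigma_1^{-1}}(x_r,y_r)$, where $\gamma_{r,j}$ is the $j$-th column of $\Gamma_r$. Hence the right-hand side of \eqref{diagP} equals $-S_{\Sigma_1^{-1}}(x_r,y_r)^\top U_{\Sigma^{-1}}^{-1}S_{\Sigma_1^{-1}}(x_r,y_r)\ge -S_{\Sigma_1^{-1}}(x_r,y_r)^\top U_{\Sigma_1^{-1}}^{-1}S_{\Sigma_1^{-1}}(x_r,y_r)$, using the Loewner bound. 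Combining with the already-established identity $LV_{\Sigma^{-1}}(x,y;\delta)=L_rV_{\Sigma_1^{-1}}(x_r,y_r;\delta)$ gives $L_rV_{\Sigma_1^{-1}}(x_r,y_r;\delta)\le -S_{\Sigma_1^{-1}}(x_r,y_r)^\top U_{\Sigma_1^{-1}}^{-1}S_{\Sigma_1^{-1}}(x_r,y_r)$, which is Definition~\ref{def_P} for the reduced system. Thus $\Sigma_1$ is a reachability Gramian of \eqref{red_system}. The one genuinely delicate point is the $U$-inequality: I need the subtracted block $\sum_{i,j}M_{\star,i}^\top\Sigma_2^{-1}M_{\star,j}k_{ij}$ to be positive semidefinite so that shrinking $U$ only enlarges $U^{-1}$; this is precisely the statement that $z^\top(\text{that block})z=\trace(M_\star(z)KM_\star(z)^\top\Sigma_2^{-1})\ge0$, which holds because $K$ is a covariance matrix and $\Sigma_2^{-1}\succ0$ — everything else is routine block-matrix algebra.
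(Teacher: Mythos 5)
Your observability half is essentially sound, but note that it rests on a claimed identity that is false: with $x=\left(\begin{smallmatrix}x_r\\0\end{smallmatrix}\right)$, $y=\left(\begin{smallmatrix}y_r\\0\end{smallmatrix}\right)$ the lower blocks $\star_G:=G_\star(x_r)-G_\star(y_r)$ and $\star_\Gamma:=\Gamma_\star(x_r)-\Gamma_\star(y_r)$ do \emph{not} disappear from the trace terms, so in fact $LV_{\Sigma}(x,y;\delta)=L_rV_{\Sigma_1}(x_r,y_r;\delta)+\delta^2\trace\left(\star_G^\top\Sigma_2\star_G\right)+\trace\left(K\star_\Gamma^\top\Sigma_2\star_\Gamma\right)$ rather than $=L_rV_{\Sigma_1}(x_r,y_r;\delta)$. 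For observability this is harmless because the leftover terms are nonnegative, so $L_rV_{\Sigma_1}\leq LV_{\Sigma}\leq -\|h_r(x_r)-h_r(y_r)\|^2$ still follows, and this corrected version is exactly the paper's argument.

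The reachability half has a genuine gap. You assert $S_{\Sigma^{-1}}(x,y)=S_{\Sigma_1^{-1}}(x_r,y_r)$, but from \eqref{def_S} the terms $M_i^\top\Sigma^{-1}(\gamma_j(x)-\gamma_j(y))$ pick up the lower blocks: writing $M_{\star,i}$ for the last $n-r$ rows of $M_i$ and $\gamma_{\star,j}$ for the $j$-th column of $\Gamma_\star$, one has $S_{\Sigma^{-1}}(x,y)=S_{r,\Sigma_1^{-1}}(x_r,y_r)+\sum_{i,j}M_{\star,i}^\top\Sigma_2^{-1}(\gamma_{\star,j}(x_r)-\gamma_{\star,j}(y_r))k_{ij}$, and this cross term has no definite sign, so it cannot simply be dropped; similarly, the leftover term $\trace\left(K\star_\Gamma^\top\Sigma_2^{-1}\star_\Gamma\right)$ on the left-hand side, which you discarded by asserting $LV_{\Sigma^{-1}}=L_rV_{\Sigma_1^{-1}}$, must be kept. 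Your Loewner step is correct as far as it goes ($U_{\Sigma^{-1}}\preceq U_{\Sigma_1^{-1}}$, hence $-S^\top U_{\Sigma^{-1}}^{-1}S\leq -S^\top U_{\Sigma_1^{-1}}^{-1}S$ for any fixed $S$), but it closes the argument only when the cross term vanishes, e.g.\ if $M\equiv 0$ or $\Gamma_\star$ is constant. In general the three ``starred'' contributions have to be treated jointly: the paper first replaces \eqref{diagP} by the $z$-parametrized inequality \eqref{id1}, decomposes $LV_{\Sigma^{-1}}$, $U_{\Sigma^{-1}}$ and $S_{\Sigma^{-1}}$ as in \eqref{id2}--\eqref{id4}, and observes that the unwanted $\Sigma_2^{-1}$-terms assemble into $\|\Sigma_2^{-\frac{1}{2}}(\star_\Gamma-M_\star(z))K^{\frac{1}{2}}\|_F^2\geq 0$, a completion of squares coupling the contributions from $LV$, $U$ and $S$; only then does the choice $z=-U_{r,\Sigma_1^{-1}}^{-1}S_{r,\Sigma_1^{-1}}(x_1,y_1)$ deliver the reduced inequality. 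Without this coupling your chain of inequalities does not go through.
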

\begin{proof}
We assume that \eqref{original_system} is balanced meaning that the balancing transformation in \eqref{bal_transform_new} has already been applied while keeping the notation of the orginal system. Consequently, we have $\Big(f, B, G, \Gamma, M, h\Big)=\Big(\tilde f, \tilde B, \tilde G, \tilde \Gamma, \tilde M, \tilde h\Big)$ in \eqref{original_system_S}. Inequality \eqref{diagP} implies that \begin{align}\nonumber
     LV_{\Sigma^{-1}}(x, y; \delta)&\leq -S_{\Sigma^{-1}}(x,y)^\top U_{\Sigma^{-1}}^{-1} S_{\Sigma^{-1}}(x, y)+\|U_{\Sigma^{-1}}^{\frac{1}{2}}z+U_{\Sigma^{-1}}^{-\frac{1}{2}}S_{\Sigma^{-1}}(x,y) \|^2\\ \label{id1}
     &= z^\top U_{\Sigma^{-1}} z +2 S_{\Sigma^{-1}}(x,y)^\top z
                      \end{align}
for all $x, y\in\mathbb R^n$, all $z\in \mathbb R^m$ and some $\delta>0$. We choose $x=\smat x_1\\
 0\srix$, $y=\smat y_1\\
 0\srix$ for $x_1, y_1\in\mathbb R^r$ and partition \begin{align*}
f(\smat x_1\\ 0\srix)-f(\smat y_1\\ 0\srix)  &= \begin{bmatrix}{f}_r(x_1)-{f}_r(y_1)\\\star_f\end{bmatrix}, \quad \Gamma(\smat x_1\\ 0\srix)-\Gamma(\smat y_1\\ 0\srix) = \begin{bmatrix}{\Gamma}_r(x_1)-{\Gamma}_r(y_1)\\\star_\Gamma\end{bmatrix}, \\
G(\smat x_1\\ 0\srix)-G(\smat y_1\\ 0\srix) &= \begin{bmatrix}{G}_r(x_1)-{G}_r(y_1)\\\star_G\end{bmatrix}
                               \end{align*}
using \eqref{def_trucation_function} while setting $\star_f:= f_\star(x_1)-f_\star(y_1)$, $\star_G:= G_\star(x_1)-G_\star(y_1)$ and $\star_\Gamma:= \Gamma_\star(x_1)-\Gamma_\star(y_1)$. With the partition of $\Sigma$ and the representation in Proposition \ref{prop_lyap}, we obtain \begin{align}\nonumber
  LV_{\Sigma^{-1}}(\smat x_1\\ 0\srix, \smat y_1\\ 0\srix; \delta)  & =  2\langle \Sigma_1^{-1}(x_1-y_1), f_r(x_1)-f_r(y_1)\rangle \\ \nonumber &+ \delta^2\trace\left((G_r(x_1)-G_r(y_1))^\top \Sigma_1^{-1} (G_r(x_1)-G_r(y_1)) \right)+ \delta^2\trace\left(\star_G^\top \Sigma_2^{-1} \star_G \right)\\ \nonumber &+ \trace\left(K (\Gamma_r(x_1)-\Gamma_r(y_1))^\top \Sigma_1^{-1} (\Gamma_r(x_1)-\Gamma_r(y_1))\right) + \trace\left(K \star_\Gamma^\top \Sigma_2^{-1} \star_\Gamma\right)   \\ \label{id2}
  &=  L_r V_{\Sigma_1^{-1}}(x_1, y_1; \delta)  + \delta^2\trace\left(\star_G^\top \Sigma_2^{-1} \star_G \right)+ \trace\left(K \star_\Gamma^\top \Sigma_2^{-1} \star_\Gamma\right)
  \end{align}
for all $x_1, y_1\in\mathbb R^r$ and some $\delta > 0$, where $L_r$ is the operator obtained when replacing $(f, G, \Gamma)$ by $(f_r, G_r, \Gamma_r)$ in \eqref{layp_op}. Below, we exploit \eqref{part_bal_control_part} and use \eqref{traceU_rel} with $X=\Sigma^{-1}$. Therefore, we find that
     \begin{align}\label{id3}
z^\top U_{\Sigma^{-1}} z &= z^\top \mathcal U z-\trace\left(K M(z)^\top \Sigma^{-1} M(z)\right) \\ \nonumber
&= z^\top \mathcal U z-\trace\left(K M_r(z)^\top \Sigma_1^{-1} M_r(z)\right)-\trace\left(K M_\star(z)^\top \Sigma_2^{-1} M_\star(z)\right).
\end{align}
Based on \eqref{traceS_rel} with $X=\Sigma^{-1}$ we have \begin{align}\nonumber
 z^\top  S_{\Sigma^{-1}}(\smat x_1\\ 0\srix, \smat y_1\\ 0\srix) = z^\top B_r^\top \Sigma_1^{-1} (x_1-y_1) &+ \trace\left(K M_r(z)^\top \Sigma_1^{-1}(\Gamma_r(x_1) - \Gamma_r(y_1))\right)\\ \label{id4}
 &+ \trace\left(K M_\star(z)^\top \Sigma_2^{-1}\star_\Gamma\right).
\end{align}
Inserting \eqref{id2}, \eqref{id3} and \eqref{id4} into \eqref{id1} yields
{\allowdisplaybreaks
\begin{align*}
 & L_r V_{\Sigma_1^{-1}}(x_1, y_1; \delta)  + \delta^2\trace\left(\star_G^\top \Sigma_2^{-1} \star_G \right)+ \trace\left(K \star_\Gamma^\top \Sigma_2^{-1} \star_\Gamma\right)\\
 &\leq z^\top \mathcal U z-\trace\left(K M_r(z)^\top \Sigma_1^{-1} M_r(z)\right)-\trace\left(K M_\star(z)^\top \Sigma_2^{-1} M_\star(z)\right)+2 z^\top B_r^\top \Sigma_1^{-1} (x_1-y_1) \\
 &\quad + 2\trace\left(K M_r(z)^\top \Sigma_1^{-1}(\Gamma_r(x_1) - \Gamma_r(y_1))\right)
 + 2\trace\left(K M_\star(z)^\top \Sigma_2^{-1}\star_\Gamma\right).                                                                                                                                                                                                   \end{align*}}
 Since we can observe that $\trace\left(K \star_\Gamma^\top \Sigma_2^{-1} \star_\Gamma\right)+ \trace\left(K M_\star(z)^\top \Sigma_2^{-1} M_\star(z)\right)- 2 \trace\left(K M_\star(z)^\top \Sigma_2^{-1}\star_\Gamma\right) =\|\Sigma_2^{-\frac{1}{2}}(\star_\Gamma- M_\star(z))K^{\frac{1}{2}}\|_F^2\geq 0$, we obtain
 \begin{align*}
  L_r V_{\Sigma_1^{-1}}(x_1, y_1; \delta)
 &\leq  z^\top \mathcal U z-\trace\left(K M_r(z)^\top \Sigma_1^{-1} M_r(z)\right)\\
 &\quad +2 z^\top B_r^\top \Sigma_1^{-1} (x_1-y_1) + 2\trace\left(K M_r(z)^\top \Sigma_1^{-1}(\Gamma_r(x_1) - \Gamma_r(y_1))\right)                                                                                                                                                                                                   \end{align*}
 for all $x_1, y_1\in\mathbb R^r$, all $z\in \mathbb R^r$ and some $\delta>0$. Let $U_{r, \cdot}$ and $S_{r, \cdot}$ be the two objects that we obtain by replacing the coefficients of \eqref{original_system} by the coefficients of \eqref{red_system} in \eqref{def_U} and \eqref{def_S}. Exploiting \eqref{traceU_rel} and \eqref{traceS_rel} for the reduced system and setting $z=- U_{r, \Sigma_1^{-1}}^{-1} S_{r, \Sigma_1^{-1}}(x_1, y_1)$ leads to \begin{align*}
  L_r V_{\Sigma_1^{-1}}(x_1, y_1; \delta)
 &\leq  z^\top U_{r, \Sigma_1^{-1}} z+2S_{r, \Sigma_1^{-1}}(x_1, y_1)^\top z=- S_{r, \Sigma_1^{-1}}(x_1, y_1)^\top U_{r, \Sigma_1^{-1}}^{-1} S_{r, \Sigma_1^{-1}} (x_1, y_1)                                                                                                                                                                                                   \end{align*}
 for  all $x_1, y_1\in\mathbb R^r$ and some  $\delta>0$. This shows that $\Sigma_1$ is a reachability Gramian of the reduced system \eqref{red_system}. We set $x=\smat x_1\\
 0\srix$, $y=\smat y_1\\
 0\srix$ for $x_1, y_1\in\mathbb R^r$ in \eqref{diagQ}.  Further, use \eqref{id2} and replace $\Sigma^{-1}$ by $\Sigma$ yielding \begin{align}\nonumber
  LV_{\Sigma}(\smat x_1\\ 0\srix, \smat y_1\\ 0\srix; \delta)  =  L_r V_{\Sigma_1}(x_1, y_1; \delta)  + \delta^2\trace\left(\star_G^\top \Sigma_2 \star_G \right)+ \trace\left(K \star_\Gamma^\top \Sigma_2 \star_\Gamma\right)\geq L_r V_{\Sigma_1}(x_1, y_1; \delta)
  \end{align}
for all $x_1, y_1\in\mathbb R^r$ and some $\delta > 0$. By \eqref{part_bal_control_part}, we have  \begin{align*}
h(\smat x_1\\ 0\srix)-h(\smat y_1\\ 0\srix)  = {h}_r(x_1)-{h}_r(y_1)
\end{align*}
for all $x_1, y_1\in\mathbb R^r$. Therefore, we obtain that $\Sigma_1$ is also an observability Gramian of \eqref{red_system}. This concludes the proof.
\end{proof}
We can now find a bound for the error between the outputs of \eqref{original_system} and \eqref{red_system} under certain assumptions on the original system. Here, it is crucial that the balancing procedure is conducted based on (full) Gramians $P$ and $Q$. Their simplified versions are not sufficient for obtaining the result below.
\begin{thm}\label{thm_error_bound}
Let $y$ be the output of \eqref{original_system} with $x(0)=0$. We assume that Gramians $P$ and $Q$ introduced in Definitions \ref{def_P} and \ref{def_Q} exist, i.e., the assumptions of Theorem \ref{thm_gram_exist} hold. Moreover, suppose that  the coefficients $f$, $G$ and $\Gamma$ of the state equation \eqref{stochstatenew} are point symmetric meaning that $f(-x)=-f(x)$ for all $x\in\mathbb R^n$ etc. We further assume that \eqref{original_system} has a balanced realization meaning that, e.g., the diagonalizing state space transformation $\mathcal S$ in Proposition \ref{prop_bal} exists. Given $u\in L^2_T$, deterministic  multiplicative controls $\tilde u$ defined in \eqref{multipl_control} and the $r$-dimensional reduced system \eqref{red_system} with output $y_r$  and $x_r(0)=0$. Then, we have \begin{align*}
\left\| y-y_{r}\right\|_{L^2_T}\leq 2\sum_{k=r+1}^n\sigma_k \|\mathcal U^{\frac{1}{2}} u\|_{L^2_T} \exp\left\{0.5\left\|\tilde u\right\|_{L^2_T}^2/\delta^2\right\},                                                                                                                                                                                                                                                                                                                                                                                                                                                                                                                                                                                    \end{align*}
where $\mathcal U$ is a positive definite matrix entering \eqref{def_U} and hence the definition of $P$ in \eqref{inequalityP}. Moreover, $\delta>0$ is the parameter entering the operator $L$ in \eqref{layp_op} and consequently the inequalities for $P$ and $Q$ in \eqref{inequalityP} and \eqref{inequalityQ}.
\end{thm}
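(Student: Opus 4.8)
The plan is to reduce the claim to a single‑state truncation and then telescope, mirroring the classical linear balanced‑truncation argument. \textbf{Step 1 (telescoping).} Applying Proposition \ref{prop_red_bal} iteratively, the system \eqref{original_system} truncated to any intermediate dimension $k$ with $r\le k\le n$ is again a balanced realization whose reachability and observability Gramians both equal $\diag(\sigma_1,\dots,\sigma_k)$; one checks that point symmetry of $f,G,\Gamma$ is inherited by the truncated coefficients (e.g.\ $f_r(-x_r)$ is the first block of $\tilde f(-x_r,0)=-\tilde f(x_r,0)$), that the free parameter $\mathcal U$ from \eqref{def_U} may be kept fixed along the chain, and that the multiplicative control of each truncated system is dominated componentwise by the original $\tilde u$ from \eqref{multipl_control}. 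Writing $y^{(k)}$ for the output of the system truncated to dimension $k$, so that $y^{(n)}=y$ and $y^{(r)}=y_r$, the triangle inequality gives $\|y-y_r\|_{L^2_T}\le\sum_{k=r}^{n-1}\|y^{(k+1)}-y^{(k)}\|_{L^2_T}$. It therefore suffices to prove the one‑step estimate $\|y^{(k+1)}-y^{(k)}\|_{L^2_T}\le 2\sigma_{k+1}\|\mathcal U^{1/2}u\|_{L^2_T}\exp\{0.5\|\tilde u\|_{L^2_T}^2/\delta^2\}$ with the \emph{same} exponential factor for every $k$; summing then produces the asserted bound since the Hankel singular values add.

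\textbf{Step 2 (one‑step estimate).} Fix a balanced system of dimension $N:=k+1$ whose last coordinate $\xi$ is scalar and carries $\sigma:=\sigma_N$, and let $\hat x\in\R^{N-1}$ be the state of its one‑step reduction; both start at $0$. Set $Z:=x$ (the full state) and let $W$ be $\hat x$ padded with a zero in the $\xi$‑component, so that $y-y_r=h(Z)-h(W)$ because the $Eu$‑terms cancel. I would apply It\^o's formula (Lemma \ref{lemstochdiff}) to $V_\Sigma(Z(t)-W(t))$, take expectations so the stochastic integral vanishes, and split the drift of $Z-W$ into the part $\tilde f(Z)-\tilde f(W)+(\tilde G(Z)-\tilde G(W))u$ together with the corresponding diffusion part, plus a ``mismatch'' remainder supported on the $\xi$‑row which is present precisely because $W$ is not a genuine trajectory of the full system. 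Bounding the bilinear term $(\tilde G(Z)-\tilde G(W))u$ by the $\delta$‑weighted Young inequality as in \eqref{estimate_on_G}, the first group reproduces $LV_\Sigma(Z,W;\delta)+V_\Sigma(Z-W)\|\tilde u\|^2/\delta^2$, and the observability inequality \eqref{diagQ} replaces $LV_\Sigma(Z,W;\delta)$ by $-\|h(Z)-h(W)\|^2=-\|y-y_r\|^2$. After integrating and absorbing the remainder (Step 3), Gronwall's inequality (Lemma \ref{gron_int}) gives $\mathbb E\int_0^T\|y-y_r\|^2\,ds\le 4\sigma^2\|\mathcal U^{1/2}u\|_{L^2_T}^2\exp\{\|\tilde u\|_{L^2_T}^2/\delta^2\}$, and taking square roots yields the factor $0.5$ in the exponent.

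\textbf{Step 3 (the remainder and the role of symmetry).} The mismatch remainder collects terms involving the truncated coordinate $\xi$, the reduced state $\hat x$ and $u$, each weighted by $\sigma$; controlling it is the core of the proof and is where the \emph{non‑simplified} reachability inequality \eqref{diagP} enters, evaluated not only at pairs $(Z,W)$ but also at pairs of the form $(Z,-W)$. Point symmetry of $f,G,\Gamma$ is indispensable here: it turns $f(Z)-f(-W)$ into $f(Z)+f(W)$, and likewise for $G$ and $\Gamma$, which is exactly the combination arising when one differentiates a quadratic form in $Z+W$ — the stochastic analogue of the mixed $(x+x_r)$‑terms in the bounded‑real certificate for the error system in the linear theory. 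Completing the square in the two resulting applications of \eqref{diagP}--\eqref{diagQ}, together with the reachability‑energy mechanism behind Theorem \ref{energy_est} used to bound $\mathbb E\,\xi^2$ by $\sigma$ times a control term, is what produces the constant $2\sigma$ rather than $\sigma$.

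\textbf{Main obstacle.} I expect the hard part to be precisely the bookkeeping in Step 3: one must show that after the symmetry substitution and the paired use of \eqref{diagP}--\eqref{diagQ} every remainder term is dominated by $\sigma\|\mathcal U^{1/2}u\|^2$ up to the common Gronwall factor, with nothing lost by working with the full Gramian $P$ (a simplified $P$ controls only $(Z,0)$‑pairs and the estimate fails to close), and that the exponential factor does not compound across the telescoping. A secondary point to verify is that point symmetry of $f,G,\Gamma$ alone suffices even though $h$ is merely assumed to vanish at the origin.
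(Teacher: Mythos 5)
Your proposal is correct and follows essentially the same route as the paper: telescoping over one-dimensional truncations justified by Proposition \ref{prop_red_bal} (with point symmetry and the fixed $\mathcal U$ preserved), combined with a one-step estimate obtained by applying It\^o's formula to $V_\Sigma(x_-)$ for the difference $x_-=x-\smat x_r\\ 0\srix$ (closed with \eqref{diagQ}) and to $V_{\Sigma^{-1}}(x_+)$ for the sum $x_+=x+\smat x_r\\ 0\srix$ (closed with \eqref{diagP} at the reflected pair $(x,-\smat x_r\\ 0\srix)$ and $z=-2u$, which is exactly where the point symmetry of $f,G,\Gamma$ enters), followed by Gronwall. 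The only minor inaccuracy is your Step 3 attribution of the constant: in the paper the factor $2\sigma_k$ does not come from bounding $\mathbb E\,\xi^2$ via the mechanism of Theorem \ref{energy_est}, but from the identities $\langle \Sigma x_-,\smat 0\\ v_0\srix\rangle=\sigma^2\langle \Sigma^{-1}x_+,\smat 0\\ v_0\srix\rangle$ (valid because $\Sigma_2=\sigma$ is scalar in each one-step truncation and $x_-$, $x_+$ share the same trailing block), so that all mismatch terms of the difference estimate become $\sigma^2\alpha_0(t)$ and are absorbed into the $V_{\Sigma^{-1}}(x_+)$ analysis, which bounds $\alpha_0$ by $4\|\mathcal U^{\frac{1}{2}}u\|_{L^2_t}^2\exp\{\|\tilde u\|_{L^2_t}^2/\delta^2\}$ and thus yields $\|y-y_r\|_{L^2_t}\le 2\sigma\|\mathcal U^{\frac{1}{2}}u\|_{L^2_t}\exp\{0.5\|\tilde u\|_{L^2_t}^2/\delta^2\}$ without any Lipschitz constants of $f_\star,G_\star,\Gamma_\star$ entering the bound.
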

\begin{proof}
We assume that \eqref{original_system} is already balanced, i.e., $\Big(f, B, G, \Gamma, M, h\Big)=\Big(\tilde f, \tilde B, \tilde G, \tilde \Gamma, \tilde M, \tilde h\Big)$ and \eqref{balanced_equations} hold. Practically, this means that we have already applied the balancing procedure explained in Section \ref{sec_BT} (which exists by assumption), but we keep the notation of original model. Properties like the point symmetry of the coefficients are preserved under a state space transformation. Let us now recall the definitions of the reduced coefficients from \eqref{def_trucation_function} and \eqref{part_bal_control_part}:
\begin{align*}
f(\smat x_r\\ 0\srix) = \begin{bmatrix}{f}_r(x_r)\\f_\star(x_r)\end{bmatrix}, \quad \Gamma(\smat x_r\\ 0\srix) = \begin{bmatrix}{\Gamma}_r(x_r)\\ \Gamma_\star(x_r)\end{bmatrix}, \quad G(\smat x_r\\ 0\srix) = \begin{bmatrix}{G}_r(x_r)\\G_\star(x_r)\end{bmatrix}, \quad h(\smat x_r\\ 0\srix) = {h}_r(x_r)
                               \end{align*}
for $x_r\in\mathbb R^r$ and  \begin{align*}
B = \begin{bmatrix}{B}_r\\ B_\star\end{bmatrix}\quad \text{and}\quad M(z) &= \begin{bmatrix}{M}_{r}(z)\\{M}_\star(z)\end{bmatrix}
                      \end{align*}
for $z\in\mathbb R^m$.
 We add a redundant line to the reduced system \eqref{red_system} yielding
\begin{equation}\label{bal_par_minus1}
 \begin{aligned}
             d\smat x_r(t)\\0\srix=&\left[f\left(\smat x_r(t)\\0\srix\right)+Bu(t)+ G\left(\smat x_r(t)\\0\srix\right)u(t)-\smat 0 \\ v_0(t)\srix\right]dt\\
             &+\left[\Gamma\left(\smat x_r(t)\\0\srix\right)+M(u(t))-\smat 0 \\ v_1(t)\srix\right]dw(t),
\end{aligned}
\end{equation}
where $v_0= f_\star(x_r)+B_\star u+G_\star(x_r)u$ and $v_1=\Gamma_\star(x_r) + M_\star(u)$. We introduce $x_-(t):= x(t) -\smat x_{r}(t)\\0\srix $ and $x_+(t):= x(t) +\smat x_{r}(t)\\0\srix $. Combining \eqref{original_system} with \eqref{bal_par_minus1} gives us
{\allowdisplaybreaks\begin{align} \nonumber
 dx_-(t) &= \left[f(x(t))-f\left(\smat x_r(t)\\0\srix\right)+\left[G(x(t))-G\left(\smat x_r(t)\\0\srix\right)\right]u(t)+\smat 0 \\ v_0(t)\srix\right]dt
 \\ \label{eq_x_minus}&\quad + \left[\Gamma(x(t))-\Gamma\left(\smat x_r(t)\\0\srix\right)+\smat 0 \\ v_1(t)\srix\right]dw(t),\\ \nonumber
  dx_+(t) &= \left[f(x(t))+f\left(\smat x_r(t)\\0\srix\right)+2B u(t)+\left[G(x(t))+G\left(\smat x_r(t)\\0\srix\right)\right]u(t)-\smat 0 \\ v_0(t)\srix\right]dt
 \\ \label{eq_x_plus} &\quad + \left[\Gamma(x(t))+\Gamma\left(\smat x_r(t)\\0\srix\right)+2M(u(t))-\smat 0 \\ v_1(t)\srix\right]dw(t).
\end{align}}
Now, we analyze quadratic forms of $x_-$ and $x_+$, exploiting Ito's formula, to prove the desired bound. Using \eqref{eq_x_minus}, we apply Lemma \ref{lemstochdiff} for  $V(x)=V_\Sigma(x)=x^\top \Sigma x$ setting $a(t)=f(x(t))-f\left(\smat x_r(t)\\0\srix\right)+\left[G(x(t))-G\left(\smat x_r(t)\\0\srix\right)\right]u(t)+\smat 0 \\ v_0(t)\srix$ and $B(t)= \Gamma(x(t))-\Gamma\left(\smat x_r(t)\\0\srix\right)+\smat 0 \\ v_1(t)\srix$. Taking the expectation, we  find that {\allowdisplaybreaks\begin{align}\nonumber
 &\frac{d}{dt}\mathbb E[V_{\Sigma}(x_-(t))]\\ \label{first_est_withSigma}
 &=\mathbb E\left[2\left\langle \Sigma x_-(t), f(x(t))-f\left(\smat x_r(t)\\0\srix\right)+\left[G(x(t))-G\left(\smat x_r(t)\\0\srix\right)\right]u(t)+\smat 0 \\ v_0(t)\srix\right\rangle\right]\\ \nonumber
 &\quad+  \mathbb E\left[\trace\left((\Gamma(x(t))-\Gamma\left(\smat x_r(t)\\0\srix\right)+\smat 0 \\ v_1(t)\srix) K (\Gamma(x(t))-\Gamma\left(\smat x_r(t)\\0\srix\right)+\smat 0 \\ v_1(t)\srix)^\top \Sigma\right) \right].                                                                                                                   \end{align}}
Analogue to \eqref{estimate_on_G} we obtain {\allowdisplaybreaks\begin{align*}
   &2\left\langle \Sigma x_-(t), \left[G(x(t))-G\left(\smat x_r(t)\\0\srix\right)\right]u(t)\right\rangle\\
   &\leq
  \delta^2\left\|\Sigma^{\frac{1}{2}}\left[G(x(t))-G\left(\smat x_r(t)\\0\srix\right)\right]\right\|^2_F + V_{\Sigma}(x_-(t)) (\left\|\tilde u(t)\right\|/\delta)^2.
                 \end{align*}}
We insert this inequality into \eqref{first_est_withSigma}. We integrate the result over $[0, t]$. This yields {\allowdisplaybreaks\begin{align}\nonumber
 \mathbb E[V_{\Sigma}(x_-(t))]&\leq  \int_{0}^t \mathbb E\left[LV_{\Sigma}(x(s), \smat x_r(s)\\0\srix; \delta)\right]+\mathbb E[V_{\Sigma}(x_-(s))] (\left\|\tilde u(s)\right\|/\delta)^2 ds\\ \label{null}
 &\quad +\int_{0}^t \mathbb E\left[2\left\langle \Sigma x_-(s), \smat 0 \\ v_0(s)\srix\right\rangle\right]  ds
 \\ \nonumber
 &\quad +\int_{0}^t  \mathbb E\left[\trace\left((2\Gamma(x(s))-2\Gamma\left(\smat x_r(s)\\0\srix\right)+\smat 0 \\ v_1(s)\srix) K \smat 0 \\ v_1(s)\srix^\top \Sigma\right) \right] ds
 \end{align}}
for all $t\in[0, T]$.
We begin with assuming $\Sigma_2=\sigma I$, $\sigma>0$, and use this scenario for the proof of the general case. Then, we observe that
{\allowdisplaybreaks\begin{equation}\label{eins}
\begin{aligned}
\left\langle \Sigma x_-(s), \smat 0 \\ v_0(s)\srix\right\rangle &= \left\langle x_-(s), \smat 0 \\ \Sigma_2 v_0(s)\srix\right\rangle= \sigma^2 \left\langle x_+(s), \smat 0 \\  \Sigma_2^{-1} v_0(s)\srix\right\rangle\\
&=\sigma^2 \left\langle \Sigma^{-1} x_+(s), \smat 0 \\   v_0(s)\srix\right\rangle.
\end{aligned}
\end{equation}}
Further, we find that {\allowdisplaybreaks
\begin{equation}\label{zwei}
\begin{aligned}
\smat 0 \\ v_1(s)\srix^\top \Sigma = \smat 0 \\ \Sigma_2 v_1(s)\srix^\top &= \sigma^2 \smat 0 \\ \Sigma_2^{-1} v_1(s)\srix^\top
=\sigma^2 \smat 0 \\ v_1(s)\srix^\top \Sigma^{-1},\\
\trace\left(\smat 0 \\ v_1(s)\srix K \smat 0 \\ v_1(s)\srix^\top \Sigma^{-1}\right)&\leq
\trace\left(3\smat 0 \\ v_1(s)\srix K \smat 0 \\ v_1(s)\srix^\top \Sigma^{-1}\right).
\end{aligned}
\end{equation}}
Using \eqref{diagQ}, it holds that
{\allowdisplaybreaks
\begin{equation}\label{drei}
\begin{aligned}
LV_{\Sigma}(x(s), \smat x_r(s)\\0\srix; \delta)&\leq
-(h(x(s))- h_r(x_r(s)))^\top (h(x(s))- h_r(x_r(s)))\\
&=-\left\| y(s)-y_r(s)\right\|^2.
\end{aligned}
\end{equation}}
Inserting \eqref{eins}, \eqref{zwei} and \eqref{drei} into \eqref{null}, we obtain
{\allowdisplaybreaks
\begin{align*}
 \mathbb E[V_{\Sigma}(x_-(t))]&\leq  \int_{0}^t -\mathbb E\left\| y(s)-y_r(s)\right\|^2+\mathbb E[V_{\Sigma}(x_-(s))] (\left\|\tilde u(s)\right\|/\delta)^2 ds\\
 &\quad +\sigma^2\int_{0}^t \mathbb E\left[2\left\langle \Sigma^{-1} x_+(s), \smat 0 \\ v_0(s)\srix\right\rangle\right]  ds
 \\
 &\quad +\sigma^2\int_{0}^t  \mathbb E\left[\trace\left((2\Gamma(x(s))-2\Gamma\left(\smat x_r(s)\\0\srix\right)+3\smat 0 \\ v_1(s)\srix) K \smat 0 \\ v_1(s)\srix^\top \Sigma^{-1}\right) \right] ds\\
 &=\sigma^2\alpha_0(t)-\left\| y-y_r\right\|_{L^2_t}^2+\int_0^t\mathbb E[V_{\Sigma}(x_-(s))] (\left\|\tilde u(s)\right\|/\delta)^2 ds
 \end{align*}}
for all $t\in[0, T]$, where the function $\alpha_0$ is defined by $\alpha_0(t) =
 \int_{0}^t \mathbb E\left[2\left\langle \Sigma^{-1} x_+(s), \smat 0 \\ v_0(s)\srix\right\rangle\right]  ds
 +\int_{0}^t  \mathbb E\left[\trace\left((2\Gamma(x(s))-2\Gamma\left(\smat x_r(s)\\0\srix\right)+3\smat 0 \\ v_1(s)\srix) K \smat 0 \\ v_1(s)\srix^\top \Sigma^{-1}\right) \right] ds$. We can now apply Lemma \ref{gron_int} with $t_0=0$, $\alpha(t)= \sigma^2\alpha_0(t)-\left\| y-y_r\right\|_{L^2_t}^2$ and $\beta(t)= (\left\|\tilde u(t)\right\|/\delta)^2$. This provides the following estimate:
 \begin{align*}
    \mathbb E[V_{\Sigma}(x_-(t))]&\leq \sigma^2\alpha_0(t)-\left\| y-y_r\right\|_{L^2_t}^2\\
    &\quad+\int_{0}^t \Big(\sigma^2\alpha_0(s)-\left\| y-y_r\right\|_{L^2_s}^2\Big) (\left\|\tilde u(s)\right\|/\delta)^2 \exp\left\{\int_s^t (\left\|\tilde u(v)\right\|/\delta)^2 dv\right\} ds
   \end{align*}
which implies that \begin{align}\label{est_first_step}
    \left\| y-y_r\right\|_{L^2_t}^2\leq \sigma^2\alpha_0(t)
    +\int_{0}^t \sigma^2\alpha_0(s) (\left\|\tilde u(s)\right\|/\delta)^2 \exp\left\{\int_s^t (\left\|\tilde u(v)\right\|/\delta)^2 dv\right\} ds
   \end{align}
for all $t\in[0, T]$. Based on the dynamics in \eqref{eq_x_plus}, we apply Lemma \ref{lemstochdiff} once more with  $V(x)=V_{\Sigma^{-1}}(x)=x^\top \Sigma^{-1} x$, where $a(t)=f(x(t))+f\left(\smat x_r(t)\\0\srix\right)+2B u(t)+\left[G(x(t))+G\left(\smat x_r(t)\\0\srix\right)\right]u(t)-\smat 0 \\ v_0(t)\srix$ and $B(t)= \Gamma(x(t))+\Gamma\left(\smat x_r(t)\\0\srix\right)+2M(u(t))-\smat 0 \\ v_1(t)\srix$. We exploit the point symmetry of $f$, $G$ and $\Gamma$ additionally yielding $f\left(\smat x_r(t)\\0\srix\right)=-f\left(-\smat x_r(t)\\0\srix\right)$, $G\left(\smat x_r(t)\\0\srix\right)=-G\left(-\smat x_r(t)\\0\srix\right)$ and $\Gamma\left(\smat x_r(t)\\0\srix\right)=-\Gamma\left(-\smat x_r(t)\\0\srix\right)$. Analogue to \eqref{null} we hence obtain {\allowdisplaybreaks
\begin{align}\nonumber
 &\mathbb E[V_{\Sigma^{-1}}(x_+(t))]\leq  \int_{0}^t \mathbb E\left[LV_{\Sigma^{-1}}(x(s), -\smat x_r(s)\\0\srix; \delta)\right]+\mathbb E[V_{\Sigma^{-1}}(x_+(s))] (\left\|\tilde u(s)\right\|/\delta)^2 ds\\ \nonumber
 & -\int_{0}^t \mathbb E\left[2\left\langle \Sigma^{-1} x_+(s), \smat 0 \\ v_0(s)\srix\right\rangle\right]  ds+\int_{0}^t \mathbb E\left[4\left\langle \Sigma^{-1} x_+(s), B u(s)\right\rangle\right]  ds
 \\   \nonumber
 & +\int_{0}^t  \mathbb E\left[\trace\left((2\Gamma(x(s))-2\Gamma\left(-\smat x_r(s)\\0\srix\right)+2M(u(s))-\smat 0 \\ v_1(s)\srix) K \big(2M(u(s))-\smat 0 \\ v_1(s)\srix\big)^\top \Sigma^{-1}\right) \right] ds\\ \label{first_bla}
 &=  \int_{0}^t \mathbb E\left[LV_{\Sigma^{-1}}(x(s), -\smat x_r(s)\\0\srix; \delta)\right]+\mathbb E[V_{\Sigma^{-1}}(x_+(s))] (\left\|\tilde u(s)\right\|/\delta)^2 ds\\ \nonumber
 &\quad +4\int_{0}^t \mathbb E\left[\left\langle \Sigma^{-1} x_+(s), B u(s)\right\rangle+\trace\left((\Gamma(x(s))-\Gamma\left(-\smat x_r(s)\\0\srix\right)) K M(u(s))^\top \Sigma^{-1}\right)\right]  ds
 \\ \nonumber
 & \quad+4\int_{0}^t  \mathbb E\left[- \left\langle u(s), \mathcal U u(s)\right\rangle+ \trace\left(M(u(s)) K M(u(s))^\top \Sigma^{-1}\right) \right] ds+4 \int_0^t  \mathbb E\|\mathcal U^{\frac{1}{2}} u(s)\|^2 ds
 \\ \nonumber
 &\quad -\int_{0}^t  \mathbb E\left[\trace\left((2\Gamma(x(s))+2\Gamma\left(\smat x_r(s)\\0\srix\right)+4 M(u(s))-\smat 0 \\ v_1(s)\srix) K \smat 0 \\ v_1(s)\srix^\top \Sigma^{-1}\right) \right] ds\\ \nonumber
 &\quad-\int_{0}^t \mathbb E\left[2\left\langle \Sigma^{-1} x_+(s), \smat 0 \\ v_0(s)\srix\right\rangle\right]  ds
 \end{align} }
for all $t\in[0, T]$. It further holds that
{\allowdisplaybreaks
\begin{equation}\label{final_bla}
 \begin{aligned}
&\trace\left(K \smat 0 \\ v_1(s)\srix^\top \Sigma^{-1} (2\Gamma\left(\smat x_r(s)\\0\srix\right)+4 M(u(s))-\smat 0 \\ v_1(s)\srix) \right)\\
&\trace\left(K \smat 0 \\ v_1(s)\srix^\top \Sigma^{-1} (-2\Gamma\left(\smat x_r(s)\\0\srix\right)+4 \big[M(u(s))+\Gamma\left(\smat x_r(s)\\0\srix\right)\big]-\smat 0 \\ v_1(s)\srix) \right)\\
&= \trace\left(K \smat 0 \\ v_1(s)\srix^\top \Sigma^{-1} (-2\Gamma\left(\smat x_r(s)\\0\srix\right)+4\smat 0\\  \Gamma_\star(x_r(s))+M_\star(u(s))\srix -\smat 0 \\ v_1(s)\srix) \right)\\
&= \trace\left(K \smat 0 \\ v_1(s)\srix^\top \Sigma^{-1} (-2\Gamma\left(\smat x_r(s)\\0\srix\right)+3\smat 0 \\ v_1(s)\srix) \right)
\end{aligned}
\end{equation}}
using the definition of $v_1$.
We insert \eqref{final_bla} into \eqref{first_bla} and exploit \eqref{traceU_rel}, \eqref{traceS_rel} as well as the definition of $\alpha_0$ leading to {\allowdisplaybreaks
\begin{align*}
 \mathbb E[V_{\Sigma^{-1}}(x_+(t))]&\leq \int_{0}^t \mathbb E\left[LV_{\Sigma^{-1}}(x(s), -\smat x_r(s)\\0\srix; \delta)\right]+\mathbb E[V_{\Sigma^{-1}}(x_+(s))] (\left\|\tilde u(s)\right\|/\delta)^2 ds\\
 &\quad +4\int_{0}^t \mathbb E\left[\left\langle S_{\Sigma^{-1}}\left(x(s), -\smat x_r(s)\\0\srix\right),  u(s)\right\rangle - \left\langle u(s), U_{\Sigma^{-1}} u(s) \right\rangle \right]   ds\\
 &\quad +4 \int_0^t  \mathbb E\|\mathcal U^{\frac{1}{2}} u(s)\|^2 ds - \alpha_0(t)
\end{align*}}
for all $t\in [0, T]$. We set $z=-2u(s)$, $x=x(s)$ and $y=-\smat x_r(s)\\0\srix$ in \eqref{id1} and obtain {\allowdisplaybreaks
\begin{align}\nonumber
     LV_{\Sigma^{-1}}(x(s), -\smat x_r(s)\\0\srix; \delta)\leq 4 u(s)^\top U_{\Sigma^{-1}} u(s) - 4 S_{\Sigma^{-1}}\left(x(s),-\smat x_r(s)\\0\srix\right)^\top u(s).
                      \end{align}}
Therefore, we have {\allowdisplaybreaks
\begin{align*}
 \mathbb E[V_{\Sigma^{-1}}(x_+(t))]\leq  4 \|\mathcal U^{\frac{1}{2}} u\|_{L^2_t}^2 - \alpha_0(t)+ \int_{0}^t \mathbb E[V_{\Sigma^{-1}}(x_+(s))] (\left\|\tilde u(s)\right\|/\delta)^2 ds
\end{align*}}
for all $t\in[0, T]$. We apply Lemma \ref{gron_int} with $t_0=0$, $\alpha(t)= 4 \|\mathcal U^{\frac{1}{2}} u\|_{L^2_t}^2 - \alpha_0(t)$ and $\beta(t)= (\left\|\tilde u(t)\right\|/\delta)^2$. This gives us \begin{align*}
    \mathbb E[V_{\Sigma^{-1}}(x_+(t))]&\leq 4 \|\mathcal U^{\frac{1}{2}} u\|_{L^2_t}^2 - \alpha_0(t)\\
    &\quad+\int_{0}^t \left( 4 \|\mathcal U^{\frac{1}{2}} u\|_{L^2_s}^2 - \alpha_0(s)\right) (\left\|\tilde u(s)\right\|/\delta)^2 \exp\left\{\int_s^t (\left\|\tilde u(v)\right\|/\delta)^2 dv\right\} ds
   \end{align*}
for all $t\in [0, T]$ implying that \begin{align*}
  &\alpha_0(t) +\int_{0}^t \alpha_0(t) (\left\|\tilde u(s)\right\|/\delta)^2 \exp\left\{\int_s^t (\left\|\tilde u(v)\right\|/\delta)^2 dv\right\} ds  \\
  &\leq 4 \|\mathcal U^{\frac{1}{2}} u\|_{L^2_t}^2+4 \|\mathcal U^{\frac{1}{2}} u\|_{L^2_t}^2 \int_{0}^t  (\left\|\tilde u(s)\right\|/\delta)^2 \exp\left\{\int_s^t (\left\|\tilde u(v)\right\|/\delta)^2 dv\right\} ds\\
  &= 4 \|\mathcal U^{\frac{1}{2}} u\|_{L^2_t}^2 \exp\left\{\int_0^t (\left\|\tilde u(s)\right\|/\delta)^2 ds\right\}
   \end{align*}
for all $t\in [0, T]$. Combining this estimate with \eqref{est_first_step}, we see that
\begin{align}\label{estim_sigID}
 \left\| y-y_r\right\|_{L^2_t}^2\leq 4\sigma^2 \|\mathcal U^{\frac{1}{2}} u\|_{L^2_t}^2 \exp\left\{\left\|\tilde u\right\|_{L^2_t}^2/\delta^2\right\}
   \end{align}
for all $t\in [0, T]$. Below, we discuss the  case of a general $\Sigma_2$ using \eqref{estim_sigID} as a basis. We can bound the $L^2_T$-error of the dimension reduction procedure introduced in Section \ref{sec_BT} as follows \begin{align*}
  \left\|y-y_{r}\right\|_{L^2_T} 
 \leq \left\|y-y_{n-1}\right\|_{L^2_T} + \sum_{i=r+1}^{n-1} \left\| y_{k}-y_{k-1}\right\|_{L^2_T}.
 \end{align*}
 Here, $y_{k}$ denotes the output of the reduced model \eqref{red_system} with dimension $k\in\{r, \dots, n-1\}$. Applying \eqref{estim_sigID} with $t=T$, we obtain $\left\|y-y_{n-1}\right\|_{L^2_T}\leq 2\sigma_n  \|\mathcal U^{\frac{1}{2}} u\|_{L^2_T} \exp\left\{0.5\left\|\tilde u\right\|_{L^2_T}^2/\delta^2\right\}$. The error in the reduction step from $y_{n-1}$ to $y_{n-2}$ can be bounded the same way exploiting Proposition \ref{prop_red_bal}. It tells us that $y_{n-1}$ is the output of a balanced system with Gramian $\diag(\sigma_1, \dots, \sigma_{n-1})$ meaning that we can rely on inequalities like in \eqref{balanced_equations}, again. Since the truncation also preserves the point symmetry of the coefficients, we can apply \eqref{estim_sigID} once more. We continue this procedure until we arrive at the reduced system of order $r$ and find in every step that $\left\| y_{k}-y_{k-1}\right\|_{L^2_T}\leq 2\sigma_k \|\mathcal U^{\frac{1}{2}} u\|_{L^2_T} \exp\left\{0.5\left\|\tilde u\right\|_{L^2_T}^2/\delta^2\right\}$ concluding this proof.
\end{proof}
Theorem \ref{thm_error_bound} is one of the main results of this paper. It provides an a-priori error bound that gives a criterion for the choice of the reduced dimension $r$. This parameter has to be chosen, so that the sum of truncated HSVs $\sigma_{r+1}, \dots, \sigma_n$ is small. The result of Theorem \ref{thm_error_bound}  is much more general than the one in \cite{redstochbil}, where $f, G, \Gamma, h$ are linear and $M\equiv 0$. In that case, the point symmetry of $f, G, \Gamma$ is naturally given. For nonlinear systems, it seems to be crucial that the coefficients are point symmetric when algebraic Gramians associated to quadratic forms are the basis of the MOR procedure. This was essentially also noticed in \cite{gen_incr_bt}. Let us consider an example constructing a system \eqref{original_system} that satisfies the requirements of Theorem \ref{thm_error_bound}.
\begin{example}\label{ex1}
For simplicity, let us assume that $m=q=1$ and that $w$ is a scalar standard Wiener process. We set $f(x) = A x - x^{\circ 3}$ and $G(x) = N x$ given matrices $A, N\in \mathbb R^{n\times n}$. Suppose that $(A+\frac{c^2}{2}I)^\top + (A+\frac{c^2}{2}I) + \delta^2 N^\top N = -Y<0$ is a negative definite matrix for some $\delta>0$ and some   $c>0$. This is always true if  $(A+\frac{c^2}{2}I)^\top +(A+\frac{c^2}{2}I)<0$ and $\delta$ is sufficiently small. Now, let $\Gamma$ be some point symmetric Lipschitz continuous function with constant $c$, e.g., $\Gamma(x) = \sin(x)$, where the sine function is applied component-wise. Hence, we have $c=1$. Using Proposition \ref{prop_lyap}
with $X=I$ yields
 \begin{align*}
LV_I(x, y; \delta)
&=2\langle x-y, A(x-y)- (x^{\circ 3}-y^{\circ 3})\rangle + \delta^2\|N(x-y)\|^2+ \|\Gamma(x)-\Gamma(y)\|^2\\
&=\langle x-y, (A^\top+A+\delta^2 N^\top N)(x-y)\rangle-2\langle x-y,  x^{\circ 3}-y^{\circ 3}\rangle+\|\Gamma(x)-\Gamma(y)\|^2.
\end{align*}
We obtain that
\begin{align}\label{est_cubic}
 -2\langle x- y,  x^{\circ 3} - y^{\circ 3}\rangle&=-2\sum_{i=1}^n (x_i^4 +y_i^4 - y_i x_i^3-x_i y_i^3)\\ \nonumber
 &=-2\sum_{i=1}^n (x_i-y_i)^2 (x_i^2 +y_i^2 + y_i x_i)\leq -\sum_{i=1}^n (x_i-y_i)^2 (x_i +y_i)^2 \leq 0.
\end{align}
The Lipschitz continuity of $\Gamma$ yields $\|\Gamma(x)-\Gamma(y)\|^2\leq \langle x-y,  c^2 (x-y)\rangle$. Consequently, we have \begin{align*}
LV_I(x, y; \delta)
\leq \langle x-y, ((A+\frac{c^2}{2}I)^\top+(A+\frac{c^2}{2}I)+\delta^2 N^\top N)(x-y)\rangle = - \langle x-y, Y (x-y)\rangle.
\end{align*}
Therefore, we find that $LV_I(x, y; \delta)
\leq -\lambda V_I(x-y)$ for some $\lambda>0$ meaning that \eqref{exist_gram_prop} is satisfied. Now, let $h$ be an arbitrary Lipschitz continuous function. Then, all assumptions of Theorem \ref{thm_gram_exist} are satisfied and hence the Gramians $P$ and $Q$ exist. Since $f$, $G$ and $\Gamma$ are point symmetric, all requirements of Theorem \ref{thm_error_bound} are met.
\end{example}
We slightly modify Example \ref{ex1} to illustrate that $G$ can also be nonlinear and does not have to be Lipschitz continuous. However, notice that we provide a scenario, where not all assumptions of Theorem \ref{thm_error_bound} hold.
\begin{example}
Let $f$ be like in Example \ref{ex1}. We assume that $(A+\frac{c^2}{2}I)^\top + (A+\frac{c^2}{2}I) = -Y<0$ is a negative definite matrix for some $c>0$ and that $\Gamma$ is a Lipschitz map with constant $c$. We set $G(x)=x^{\circ 2}$ and obtain \begin{align*}
LV_I(x, y; \delta)
\leq \langle x-y, ((A+\frac{c^2}{2}I)^\top+(A+\frac{c^2}{2}I))(x-y)\rangle -2\langle x- y,  x^{\circ 3} - y^{\circ 3}\rangle + \delta^2\|x^{\circ 2}-y^{\circ 2}\|^2.
\end{align*}
We obtain that \begin{align*}
\delta^2\|x^{\circ 2}-y^{\circ 2}\|^2 =\delta^2\sum_{i=1}^n(x_i^2-y_i^2)^2= \delta^2\sum_{i=1}^n(x_i-y_i)^2(x_i+y_i)^2
               \end{align*}
and consequently, using \eqref{est_cubic}, it holds that \begin{align*}
LV_I(x, y; \delta)
\leq \langle x-y, ((A+\frac{c^2}{2}I)^\top+(A+\frac{c^2}{2}I))(x-y)\rangle =- \langle x-y, Y(x-y)\rangle \leq -\lambda V_I(x-y)
\end{align*}
for some $\lambda >0$ given that $0<\delta\leq 1$. Therefore, the assumptions of Theorem \ref{thm_gram_exist} are satisfied if $h$ is Lipschitz continuous. For that reason the Gramians $P$ and $Q$ exist. However, the error bound of Theorem \ref{thm_error_bound} might not hold as we are lacking point symmetry.
\end{example}

We conclude this paper with formulating several auxiliary results that we frequently use within this work.

\appendix
\section{Supporting lemmas}
Below, let us state Ito's formula that is frequently used in this manuscript.
\begin{lem}\label{lemstochdiff}
Suppose that $a, b_1, \ldots, b_q$ are $\mathbb R^n$-valued $(\mathcal F_t)_{t\in[0, T]}$-adapted processes with $a$ being almost surely Lebesgue integrable and $b_i$ satisfying $\int_0^T \|b_i(t)\|^2 dt<\infty$ almost surely. Let
 $w=\begin{bmatrix} w_1& \ldots & w_q\end{bmatrix}^\top$ be a Wiener  process with covariance matrix $K=(k_{ij})$. If $x$ is an Ito process given by \begin{align*}
 dx(t)=a(t) dt+ B(t)dw(t),                                                                                                                                  \end{align*}
where $B=\begin{bmatrix} b_1& \ldots & b_q\end{bmatrix}$. Then, given a sufficiently smooth function $V:\mathbb R^n\to \mathbb R$, we have \begin{align*}
 dV(x(t))=&\left[\left\langle D V(x(t)), a(t)\right\rangle + \frac{1}{2}  \trace\left(B(t) K B(t)^\top D^2 V(x(t))\right) \right] dt\\
 &+    \left\langle D V(x(t)), B(t)dw(t)\right\rangle                                                                                                                       \end{align*}
\end{lem}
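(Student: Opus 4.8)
The statement is the version of It\^o's formula for an It\^o process whose driving Wiener process $w$ has correlated coordinates with $\mathbb E[w(t)w(t)^\top]=Kt$, so the plan is to reduce it to the textbook It\^o formula for a \emph{standard} (independent-coordinate) Wiener process and then translate the diffusion terms back. Since $K$ is a covariance matrix it is symmetric positive semidefinite and admits a symmetric square root $K^{\frac12}$. First I would treat the non-degenerate case $\det K\neq 0$, in which $K^{-\frac12}$ exists as well.

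Set $\hat w(t):=K^{-\frac12}w(t)$. This process is $(\mathcal F_t)$-adapted and inherits from $w$ continuity, zero mean and independent, stationary, Gaussian increments, while $\mathbb E[\hat w(t)\hat w(s)^\top]=K^{-\frac12}(K\min(s,t))K^{-\frac12}=\min(s,t)I$, so $\hat w$ is a standard $q$-dimensional Wiener process with independent coordinates. Writing $\hat B(t):=B(t)K^{\frac12}$, the driving term becomes $B(t)\,dw(t)=B(t)K^{\frac12}K^{-\frac12}\,dw(t)=\hat B(t)\,d\hat w(t)$, hence $dx(t)=a(t)\,dt+\hat B(t)\,d\hat w(t)$ with standard noise; the hypotheses transfer because $K^{\frac12}$ is a fixed bounded matrix, so the columns of $\hat B$ are a.s.\ square integrable and $a$ remains a.s.\ Lebesgue integrable. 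Applying the classical multidimensional It\^o formula for standard Wiener processes (see, e.g., \cite{mao}) gives
\[ dV(x(t))=\Big[\langle DV(x(t)),a(t)\rangle+\tfrac12\trace\big(\hat B(t)\hat B(t)^\top D^2V(x(t))\big)\Big]dt+\langle DV(x(t)),\hat B(t)\,d\hat w(t)\rangle. \]
It remains to translate the two $\hat B$-terms using the symmetry of $K^{\frac12}$: $\hat B(t)\hat B(t)^\top=B(t)K^{\frac12}K^{\frac12}B(t)^\top=B(t)KB(t)^\top$ and $\langle DV(x),\hat B\,d\hat w\rangle=\langle DV(x),B\,dw\rangle$, which reproduces exactly the asserted identity.

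For the degenerate case (when $K$ is singular, say of rank $r$) I would use that the coordinates of $w$ a.s.\ take values in $\im K^{\frac12}$, so $w$ factors through an $r$-dimensional standard Wiener process $\hat w$ obtained via the Moore--Penrose pseudoinverse of $K^{\frac12}$ (or, equivalently, via a full-column-rank factor $L$ of $K$), yielding a corresponding $\hat B$ with $\hat B\hat B^\top=BKB^\top$, after which the non-degenerate argument applies verbatim. Alternatively, and perhaps more cleanly, one can avoid the reduction altogether and invoke the It\^o formula for continuous semimartingales directly, using that the quadratic covariations of the noise are $d\langle w_i,w_j\rangle_t=k_{ij}\,dt$, whence $d\langle x_k,x_l\rangle_t=(B(t)KB(t)^\top)_{kl}\,dt$ and the second-order term equals $\tfrac12\sum_{k,l}\partial_{kl}V(x(t))\,(B(t)KB(t)^\top)_{kl}\,dt=\tfrac12\trace\big(B(t)KB(t)^\top D^2V(x(t))\big)\,dt$. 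The argument is essentially standard; the only point requiring care -- hence the main obstacle -- is the bookkeeping in the rank-deficient case together with verifying that adaptedness, local integrability of the drift and square integrability of the diffusion columns survive the change of driving noise, the trace identity $\hat B\hat B^\top=BKB^\top$ itself being immediate from the symmetry of $K$.
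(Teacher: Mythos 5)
Your proof is correct, but it does something genuinely different from the paper: the paper offers no argument at all and simply points to the literature (``a proof can for instance be found in Theorem 6.4 of Mao's book''), whereas you actually supply the step that such a citation quietly glosses over, namely the passage from a Wiener process with covariance $K$ to the standard-noise setting in which the textbook It\^o formula is stated. Your nondegenerate reduction via $\hat w=K^{-\frac12}w$, $\hat B=BK^{\frac12}$ is sound (adaptedness, independence of increments and the integrability hypotheses all survive a fixed linear transformation, and $\hat B\hat B^\top=BKB^\top$ by symmetry of $K^{\frac12}$), the rank-deficient case via a full-column-rank factor $L$ with $K=LL^\top$ and $\hat w=L^{+}w$ is handled correctly, and your alternative route through the semimartingale It\^o formula with $d\langle w_i,w_j\rangle_t=k_{ij}\,dt$ is arguably the cleanest of all, since it avoids the case distinction entirely. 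What the paper's approach buys is brevity -- the result is standard and the authors treat it as such -- while your approach buys transparency about exactly where the covariance matrix $K$ enters the trace term, which is the only feature of the lemma that is not literally in the standard references. No gap to report; if anything, the one-line verification that $K^{-\frac12}w$ is a Wiener process with respect to the \emph{given} filtration (so that the stochastic integrals against $\hat w$ and $w$ are defined relative to the same $(\mathcal F_t)_{t\ge 0}$) is the only point worth stating explicitly rather than implicitly, and you do mention it.
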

\begin{proof}
This result is well-known and a proof can for instance be found in \cite[Theorem 6.4]{mao}.
\end{proof}
We need various versions of Gronwall's lemma in this paper. In order to render this paper self-contained, we prove each version that we state below. We begin with a differential form that is essential in the context of analyzing stability of (stochastic) systems.
\begin{lem}[Gronwall's lemma -- differential form]\label{gron_dif}
Let $I$ be either $[0, \infty)$ or $[0, T]$ for some $T>0$. Suppose that $z: I\rightarrow \mathbb R$ is absolutely continuous and $\beta:I \rightarrow \mathbb R$ is (locally) integrable on $I$. If \begin{align*}
   \dot z(t)\leq  \beta(t) z(t)
   \end{align*}
holds Lebesgue almost everywhere in $I$, then we have\begin{align*}
 z(t) \leq z(0) \expn^{\int_0^t\beta(s)ds}
\end{align*}
for all $t\in I$.
\end{lem}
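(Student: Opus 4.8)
The plan is to use the classical integrating-factor argument, being a little careful about the absolute continuity bookkeeping since $z$ is only assumed absolutely continuous and $\beta$ only (locally) integrable. First I would set $B(t):=\int_0^t\beta(s)\,ds$ for $t\in I$. Since $\beta$ is locally integrable, $B$ is absolutely continuous on every compact subinterval of $I$, with $\dot B(t)=\beta(t)$ for Lebesgue-almost every $t$. Define the integrating factor $\mu(t):=\expn^{-B(t)}$; because $B$ is continuous (hence locally bounded) and $x\mapsto\expn^{-x}$ is locally Lipschitz, $\mu$ is absolutely continuous on compact subintervals of $I$ with $\dot\mu(t)=-\beta(t)\mu(t)$ almost everywhere, and $\mu(t)>0$ throughout.

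Next I would consider the auxiliary function $\varphi(t):=z(t)\mu(t)$. As a product of two functions that are absolutely continuous on compact subintervals, $\varphi$ is itself absolutely continuous on such intervals, and the product rule holds almost everywhere:
\begin{align*}
\dot\varphi(t)=\dot z(t)\mu(t)+z(t)\dot\mu(t)=\bigl(\dot z(t)-\beta(t)z(t)\bigr)\mu(t)\leq 0
\end{align*}
for Lebesgue-almost every $t\in I$, where the inequality uses the hypothesis $\dot z(t)\leq\beta(t)z(t)$ together with $\mu(t)>0$.

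Then I would integrate: since $\varphi$ is absolutely continuous, the fundamental theorem of calculus for the Lebesgue integral gives $\varphi(t)-\varphi(0)=\int_0^t\dot\varphi(s)\,ds\leq 0$ for every $t\in I$, i.e.\ $z(t)\mu(t)\leq z(0)$. Multiplying by $\expn^{B(t)}=\mu(t)^{-1}>0$ yields $z(t)\leq z(0)\expn^{B(t)}=z(0)\expn^{\int_0^t\beta(s)\,ds}$, which is the claim.

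The only genuinely delicate point is the measure-theoretic justification that $\varphi=z\mu$ is absolutely continuous on compact subintervals and that the Leibniz rule $\dot\varphi=\dot z\mu+z\dot\mu$ is valid almost everywhere under these weak regularity assumptions; this follows from the standard fact that a product of absolutely continuous functions on a compact interval is absolutely continuous, but it is worth stating explicitly. Everything else is a routine computation.
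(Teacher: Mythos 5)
Your proof is correct and follows essentially the same route as the paper: both define the auxiliary function $z(t)\expn^{-\int_0^t\beta(s)ds}$, show its derivative is nonpositive almost everywhere, and integrate. You merely spell out the measure-theoretic bookkeeping (absolute continuity of the product and the a.e.\ Leibniz rule) that the paper leaves implicit.
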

\begin{proof}
 We define $\tilde z(t):= z(t) \expn^{-\int_0^t\beta(s)ds}$ and obtain $\dot{\tilde z}(t) =\expn^{-\int_0^t\beta(s)ds}(\dot z(t)-\beta(t) z(t))\leq 0$ for almost all $t\in I$. Integrating this inequality yields $ z(t) \expn^{-\int_0^t\beta(s)ds}-z(0)=\tilde z(t)-\tilde z(0)\leq 0$ for all $t\in I$.
\end{proof}
The following integral version of Gronwall's lemma is vital for the error analysis and the dominant subspace detection  in this paper.
\begin{lem}[Gronwall's lemma -- integral form]\label{gron_int}
Let $I$ be either $[t_0, \infty)$ or $[t_0, T]$ for some $T>0$ and $0\leq t_0< T$. Suppose that $z, \alpha: I \rightarrow \mathbb R$ are measurable (locally) bounded functions and $\beta: I\rightarrow \mathbb R$ is a nonnegative integrable function.
If \begin{align}\label{assum_gron}
    z(t)\leq \alpha(t)+\int_{t_0}^t \beta(s) z(s) ds
   \end{align}
for all $t\in I$, then it holds that \begin{align}\label{gronwallineq}
    z(t)\leq \alpha(t)+\int_{t_0}^t \alpha(s)\beta(s) \exp\left\{\int_s^t \beta(v)dv\right\} ds
   \end{align}
for all $t\in I$. If $\alpha$ is absolutely continuous, then \eqref{gronwallineq} yields \begin{align}\label{gronwallineq_abs_cont}
    z(t)\leq \exp\left\{\int_{t_0}^t \beta(v)dv\right\} \alpha(t_0)+\int_0^t \dot \alpha(s) \exp\left\{\int_s^t \beta(v)dv\right\} ds,
   \end{align}
where $\dot \alpha$ is the derivative of $\alpha$ Lebesgue almost everywhere. Moreover, a non-decreasing and continuous function $\alpha$ implies that \eqref{gronwallineq} becomes \begin{align*}
    z(t)\leq \alpha(t)\exp\left\{\int_{t_0}^t \beta(s)ds\right\}
   \end{align*}
   for all $t\in I$.
\end{lem}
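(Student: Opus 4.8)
The plan is to derive the master inequality \eqref{gronwallineq} first by a classical integrating-factor argument, and then to obtain \eqref{gronwallineq_abs_cont} and the monotone special case as immediate consequences by integration by parts and by a crude monotonicity estimate, respectively.

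For \eqref{gronwallineq} I would set $R(t):=\int_{t_0}^t\beta(s)z(s)\,ds$. Since $s\mapsto\beta(s)z(s)$ is integrable on every compact subinterval (as $\beta$ is integrable and $z$ is locally bounded), $R$ is absolutely continuous on $I$ with $\dot R(t)=\beta(t)z(t)$ for almost every $t\in I$. Feeding in the hypothesis \eqref{assum_gron}, i.e.\ $z(t)\le\alpha(t)+R(t)$, together with $\beta(t)\ge0$, yields $\dot R(t)\le\beta(t)\alpha(t)+\beta(t)R(t)$ a.e. I would then multiply by the integrating factor $E(t):=\exp\left\{-\int_{t_0}^t\beta(v)\,dv\right\}$, which is absolutely continuous and bounded on $I$ (here the integrability of $\beta$, not just local integrability, is used so the argument covers $I=[t_0,\infty)$), and use $\dot E(t)=-\beta(t)E(t)$ a.e.\ to get $\frac{d}{dt}\bigl(E(t)R(t)\bigr)=E(t)\bigl(\dot R(t)-\beta(t)R(t)\bigr)\le E(t)\beta(t)\alpha(t)$ a.e. Integrating from $t_0$ to $t$ and using $R(t_0)=0$ gives $E(t)R(t)\le\int_{t_0}^tE(s)\beta(s)\alpha(s)\,ds$, hence $R(t)\le\int_{t_0}^t\exp\left\{\int_s^t\beta(v)\,dv\right\}\beta(s)\alpha(s)\,ds$; adding $\alpha(t)$ to both sides of $z(t)\le\alpha(t)+R(t)$ produces \eqref{gronwallineq}.

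For the refinements I would work from \eqref{gronwallineq}. When $\alpha$ is absolutely continuous, fix $t$ and write $G(s):=\exp\left\{\int_s^t\beta(v)\,dv\right\}$, which is absolutely continuous in $s$ with $\partial_sG(s)=-\beta(s)G(s)$ a.e.; integration by parts for absolutely continuous functions then gives $\int_{t_0}^t\alpha(s)\beta(s)G(s)\,ds=-\int_{t_0}^t\alpha(s)\,\partial_sG(s)\,ds=-\alpha(t)G(t)+\alpha(t_0)G(t_0)+\int_{t_0}^t\dot\alpha(s)G(s)\,ds$, and since $G(t)=1$ and $G(t_0)=\exp\left\{\int_{t_0}^t\beta(v)\,dv\right\}$, substituting back into \eqref{gronwallineq} and cancelling the two $\alpha(t)$ terms yields \eqref{gronwallineq_abs_cont}. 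When $\alpha$ is non-decreasing and continuous, I would bound $\alpha(s)\le\alpha(t)$ for $s\le t$ inside \eqref{gronwallineq} and use the elementary identity $\int_{t_0}^t\beta(s)\exp\left\{\int_s^t\beta(v)\,dv\right\}ds=-\Bigl[\exp\left\{\int_s^t\beta(v)\,dv\right\}\Bigr]_{s=t_0}^{s=t}=\exp\left\{\int_{t_0}^t\beta(v)\,dv\right\}-1$, which collapses the estimate to $z(t)\le\alpha(t)\Bigl(1+\exp\left\{\int_{t_0}^t\beta\right\}-1\Bigr)=\alpha(t)\exp\left\{\int_{t_0}^t\beta(s)\,ds\right\}$.

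The main obstacle is not conceptual but measure-theoretic bookkeeping: because $z$ is only assumed measurable and locally bounded rather than continuous, I must justify carefully that $R$ is genuinely absolutely continuous, that $\dot R=\beta z$ holds Lebesgue-a.e., and that the product rule for $E(t)R(t)$ and the integration by parts for $\alpha$ and $G$ are valid — all of which rest on the fundamental theorem of calculus for the Lebesgue integral applied to $L^1_{\mathrm{loc}}$ integrands and on the stability of absolute continuity under products and compositions with locally Lipschitz maps on bounded ranges. Once these standard facts are invoked, every inequality above is routine.
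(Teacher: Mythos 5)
Your proposal is correct and follows essentially the same route as the paper: an integrating-factor argument applied to the remainder $\int_{t_0}^t\beta z\,ds$ (the paper works directly with the product $\exp\{-\int_{t_0}^t\beta\}\int_{t_0}^t\beta z\,ds$, which is exactly your $E(t)R(t)$), followed by integration by parts for the absolutely continuous case and the bound $\alpha(s)\le\alpha(t)$ plus exact evaluation of $\int_{t_0}^t\beta(s)\exp\{\int_s^t\beta(v)dv\}ds$ for the monotone case. The measure-theoretic points you flag are handled implicitly in the paper in the same standard way, so there is no substantive difference.
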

\begin{proof}
By assumptions, all appearing integrals are well-defined. We define the function $\tilde z(t)= \exp\left\{-\int_{t_0}^t \beta(v)dv\right\} \int_{t_0}^t \beta(s) z(s) ds$. It is differentiable Lebesgue almost everywhere. Therefore, we find that \begin{align*}
 \dot{\tilde z}(t) = \exp\left\{-\int_{t_0}^t \beta(v)dv\right\} \beta(t)\Big(z(t)-\int_{t_0}^t \beta(s) z(s) ds\Big)\leq \exp\left\{-\int_{t_0}^t \beta(v)dv\right\} \beta(t) \alpha(t)                                                                                                                                                                                                                                      \end{align*}
for almost all $t\in I$ using assumption \eqref{assum_gron} and  $\beta$ being nonnegative. We integrate this inequality over $[t_0, t]$ and obtain \begin{align*}
 \tilde z(t)\leq \int_{t_0}^t  \exp\left\{-\int_{t_0}^s \beta(v)dv\right\} \beta(s) \alpha(s) ds                                                                                                                                           \end{align*}
for all $t\in I$. Again, by \eqref{assum_gron}, we observe that \begin{align*}
\exp\left\{-\int_{t_0}^t \beta(v)dv\right\}(z(t)-\alpha(t))&\leq \exp\left\{-\int_{t_0}^t \beta(v)dv\right\} \int_{t_0}^t \beta(s) z(s) ds = \tilde z(t)\\
&\leq \int_{t_0}^t  \exp\left\{-\int_{t_0}^s \beta(v)dv\right\} \beta(s) \alpha(s) ds
                    \end{align*}
for all $t\in I$. This leads to \eqref{gronwallineq}. If $\alpha$ is absolutely continuous, we can use integration by parts leading \begin{align*}
&\int_{t_0}^t \alpha(s)\beta(s) \exp\left\{\int_s^t \beta(v)dv\right\} ds \\
&= \Big[ -\alpha(s)\exp\left\{\int_s^t \beta(v)dv\right\}\Big]_{t_0}^t + \int_{t_0}^t \dot\alpha(s) \exp\left\{\int_s^t \beta(v)dv\right\} ds.                                                                                                                             \end{align*}
This gives us \eqref{gronwallineq_abs_cont}. Given that $\alpha$ is non-decreasing and continuous, we obtain from \eqref{gronwallineq} that \begin{align*}
    z(t)&\leq \alpha(t)\Big(1+\int_{t_0}^t \beta(s) \exp\left\{\int_s^t \beta(v)dv\right\} ds \Big)= \alpha(t)\Big(1+\Big[ -\exp\left\{\int_s^t \beta(v)dv\right\}\Big]_{t_0}^t\Big)\\
    &= \alpha(t)\exp\left\{\int_{t_0}^t \beta(v)dv\right\}
   \end{align*}
for all $t\in I$. This concludes the proof.
\end{proof}

\section*{Acknowledgments}
 MR is supported by the DFG via the individual grant ``Low-order approximations for large-scale problems arising in the context of high-dimensional
PDEs and spatially discretized SPDEs''-- project number 499366908.

\bibliographystyle{abbrv}
\bibliography{ref_nonlinear_mor}

\end{document}